\patchcmd{\@addmarginpar}{\ifodd\c@page}{\ifodd\c@page\@tempcnta\m@ne}{}{}
\DeclareSymbolFont{cyrletters}{OT2}{wncyr}{m}{n}
\DeclareMathSymbol{\Sha}{\mathalpha}{cyrletters}{"58}
\newcommand{\segment}[2]{\subsection{#2}\label{#1}}
\newcommand{\ssegment}[2]{\subsubsection{#2}\label{#1}}
\theoremstyle{definition}
\newtheorem*{prop*}{Proposition}
\theoremstyle{definition}
\newtheorem*{thm*}{Theorem}
\theoremstyle{definition}
\newtheorem*{cor*}{Corollary}
\theoremstyle{definition} 
\newtheorem{ssprop}[subsubsection]{Proposition}
\theoremstyle{definition} 
\newtheorem{sprop}[subsection]{Proposition}
\theoremstyle{definition} 
\newtheorem{sconj}[subsection]{Conjecture}
\theoremstyle{definition} 
\newtheorem{sthm}[subsection]{Theorem}
\theoremstyle{definition} 
\newtheorem{ssthm}[subsubsection]{Theorem}
\theoremstyle{definition} 
\newtheorem{sslm}[subsubsection]{Lemma}
\theoremstyle{definition} 
\newtheorem{slm}[subsection]{Lemma}
\theoremstyle{definition} 
\newtheorem{scor}[subsection]{Corollary}
\theoremstyle{definition} 
\newcommand{\ssCorollary}[2]{\begin{scor} \label{#1} 
{#2} \end{sscor}}
\theoremstyle{definition} 
\newtheorem{ssconj}[subsubsection]{Conjecture}
\theoremstyle{definition}
\newtheorem*{conj*}{Conjecture}
\theoremstyle{definition} 
\newtheorem{sscond}[subsubsection]{Condition}
\theoremstyle{definition} 
\newtheorem{ssrmk}[subsubsection]{Remark}
\let\oldmarginpar\marginpar
\renewcommand\marginpar[1]{\-\oldmarginpar[\raggedleft\footnotesize #1]%
{\raggedright\footnotesize #1}}
\newcommand{\from}{\leftarrow}
\newcommand{\xto}{\xrightarrow}
\renewcommand{\Im}{\operatorname{Im}}
\newcommand{\Spec}{\operatorname{Spec}}
\newcommand{\Hom}{\operatorname{Hom}}
\newcommand{\Ext}{\operatorname{Ext}}
\newcommand{\Aut}{\mathrm{Aut}\,}
\newcommand{\m}[1]{\mathrm{#1}}
\newcommand{\bb}[1]{\mathbb{#1}}
\newcommand{\cl}[1]{\mathcal{#1}}
\newcommand{\la}{\lambda}
\newcommand{\ka}{\kappa}
\newcommand{\Si}{\Sigma}
\newcommand{\ze}{\zeta}
\newcommand{\al}{\alpha}
\newcommand{\Om}{\Omega}
\newcommand{\ep}{\epsilon}
\newcommand{\de}{\delta}
\newcommand{\Cc}{\mathcal{C}}
\newcommand{\ZZ}{\bb{Z}}
\newcommand{\DD}{\bb{D}}
\newcommand{\NN}{\bb{N}}
\newcommand{\QQ}{\bb{Q}}
\newcommand{\PP}{\bb{P}}
\newcommand{\KK}{\bb{K}}
\newcommand{\Bb}{\mathcal{B}}
\newcommand{\Ff}{\mathcal{F}}
\newcommand{\Hh}{\mathcal{H}}
\renewcommand{\AA}{\bb{A}}
\newcommand{\Yy}{\mathcal{Y}}
\newcommand{\Oo}{\mathcal{O}}
\newcommand{\Kk}{\mathcal{K}}
\newcommand{\Aa}{\mathcal{A}}
\newcommand{\Ee}{\mathcal{E}}
\newcommand{\Pp}{\mathcal{P}}
\newcommand{\Dd}{\mathcal{D}}
\newcommand{\Ss}{\mathcal{S}}
\newcommand{\inv}{^{-1}}
\newcommand{\areq}{\ar@{=}}
\newcommand{\suphook}{\ar@{^(->}}
\newcommand{\subhook}{\ar@{_(->}}
\newcommand{\smses}[6]
{
\[
\xymatrix{
1 \ar[r] &
#1 \ar[r]_-{#2} &
#3 \ar[r]_-{#4} &
#5 \ar[r] \ar@/_1.5pc/[l]_-{#6} &
1
}
\]
}
\newcommand{\thrpl}{\PP^1 \setminus \{0,1,\infty\}}
\newcommand{\et}{{\textrm {\'et}}}
\newcommand{\un}{\m{un}}
\newcommand{\DA}{\operatorname{DA}}
\newcommand{\Ho}{\operatorname{Ho}}
\newcommand{\Mdga}{\m{Mdga}}
\newcommand{\Sm}{\operatorname{Sm}}
\newcommand{\op}{^\m{op}}
\newcommand{\DMdga}{\operatorname{DMdga}}
\newcommand{\one}{\mathbbm{1}}
\newcommand{\DDMdga}{\operatorname{\DD Mdga}}
\newcommand{\colim}{\operatorname{colim}}
\newcommand{\Set}{\operatorname{Set}}
\newcommand{\Ch}{\operatorname{Ch}}
\newcommand{\DDA}{\operatorname{\DD A}}
\newcommand{\CAlg}{\operatorname{CAlg}}
\newcommand{\ChM}{\operatorname{ChM}}
\newcommand{\SmPr}{\operatorname{SmPr}}
\newcommand{\CCoalg}{\operatorname{CCoalg}}
\newcommand{\MChC}{\operatorname{MChC}}
\newcommand{\Id}{\operatorname{Id}}
\newcommand{\kbar}{{\overline k}}
\renewcommand{\cl}{\operatorname{cl}}
\newcommand{\Xbar}{\overline X}
\newcommand{\gr}{\mathrm{gr}}
\newcommand{\Fin}{\mathrm{Fin}}
\newcommand{\Finst}{\Fin_*}
\newcommand{\Fun}{\operatorname{Fun}}
\newcommand{\Lan}{\operatorname{Lan}}
\newcommand{\Tot}{\operatorname{Tot}}
\newcommand{\cosk}{\operatorname{cosk}}
\renewcommand{\Pr}{\operatorname{Pr}}
\title{Morphisms of rational motivic homotopy types}
\author{Ishai Dan-Cohen and Tomer Schlank}
\thanks{I.D. was supported by an ISF grant for work ``Around Kim's conjecture: from homotopical foundations to algorithmic applications.'' T.S. was supported by an Alon Fellowship.}
\date{\today} 
\begin{document}

\maketitle

\raggedbottom
\SelectTips{cm}{11}

\begin{abstract}

We investigate several interrelated foundational questions pertaining to the study of motivic dga's of Dan-Cohen--Schlank \cite{RMPS} and Iwanari \cite{Iwanari}.
 In particular, we note that morphisms of motivic dga's can reasonably be thought of as a nonabelian analog of motivic cohomology. Just as abelian motivic cohomology is a homotopy group of a spectrum coming from K-theory, the space of morphisms of motivic dga's is a certain limit of such spectra; we give an explicit formula for this limit --- a possible first step towards explicit computations or dimension bounds. We also consider commutative comonoids in Chow motives, which we call ``motivic Chow coalgebras''. We discuss the relationship between motivic Chow coalgebras and motivic dga's of smooth proper schemes.

As a small first application of our results, we show that among schemes which are finite \'etale over a number field, morphisms of associated motivic dga's are no different than morphisms of schemes. This may be regarded as a small consequence of a plausible generalization of Kim's relative unipotent section conjecture, hence as an ounce of evidence for the latter.

\bigskip
\noindent
\textbf{
AMS 2010 Mathematics Subject Classification: 14C15, 55P62
}

\end{abstract}

\setcounter{tocdepth}{1}
\tableofcontents


\section{Introduction}

\segment{mad2}{Overview}

Let $X$ be a smooth scheme over a base $Z$. When $k$ is a number field and $Z = \Spec k$ is the associated ``number scheme'', a section of the projection
\[
X \to Z
\]
is often referred to as a \emph{rational point}. When instead $Z$ is an open subscheme of $\Spec \Oo_k$ (an ``open integer scheme''), such a section is often called an \emph{integral point}. Regardless of the particular setting, our theory of \textit{motivic dga's} gives rise to a weaker notion of point which we call \emph{rational motivic point}. This theory is based on a certain functor
\[
C^* = C^*_{\DMdga}: \Sm(Z)\op \to \DMdga(Z,\QQ)
\]
from the category of smooth schemes over $Z$ to a category of \emph{motivic dga's}, which may be thought of as a motivic avatar of rational homotopy theory. To construct it, we consider a \emph{model} $M(Z,\QQ)$ for the tensor triangulated category $\DA(Z,\QQ)$ of motivic complexes over $Z$ with $\QQ$ coefficients. The category $\Mdga(Z,\QQ)$ of commutative monoids in $M(Z,\QQ)$ inherits a model structure, and we let $\DMdga(Z,\QQ)$ be its homotopy category. The ensuing theory of motivic dga's first grew out of conversations with Marc Levine and Gabriela Guzman and developed in \textit{Rational motivic path spaces}... \cite{RMPS}. Closely related theories have been simultaneously under development in works by Isamu Iwanari \cite{Iwanari} and Gabriela Guzman \cite{Guzman}.

In terms of the category $\DMdga(Z,\QQ)$, and the functor $C^*$, we define a \emph{rational motivic point of $X$} to be an augmentation
\[
C^*X \to \one
\]
of $C^*X$ in $\DMdga(Z,\QQ)$.\footnote{
While the adjective \emph{rational} in the term \emph{rational point} refers to the base, its meaning in the term \emph{rational motivic point} is different --- it refers to the coefficients, and to the effect they have on \emph{rationalizing} spaces.
}
 Being a functor, $C^*$ gives rise to a map from \emph{points} (rational, integral,...) to \emph{rational motivic points}.

Motivation for this terminology comes from the theory of the \emph{unipotent fundamental group} \cite{Deligne89, DelGon} and from Minhyong Kim's nonabelian refinement of Chabauty's method \cite{kimi, kimii}. Kim's theory pertains to the case that $X$ is a hyperbolic curve, and shows (in a growing list of cases) that the prounipotent groupoid of homotopy classes of motivic paths connecting the integral points to a fixed base-point contains enough information to single out the integral points inside the space $X(Z_p)$ of $p$-adic points. In turn, our work \textit{Rational motivic path spaces} \cite{RMPS} shows that rational motivic points contain enough information to retrieve the groupoid of homotopy classes of motivic paths. 

If a morphism
$
C^*X \to \one
$
in $\DMdga(Z,\QQ)$ contains enough information to merit the name \textit{rational motivic point}, a morphism
\[
C^*X \to C^*Y
\]
for $Y$ another smooth scheme over $Z$ may reasonably be called \emph{rational motivic morphism from $Y$ to $X$}. Our goal in this work is to study the set
\[
\Hom_{\DMdga(Z,\QQ)}(C^*X, C^*Y)
\]
of rational motivic morphisms. Our main result gives rise to a homotopy spectral sequence which converges to this pointed set; its first pages consist of certain groups constructed out of rational K-theory.

When $Z = \Spec k$ is a number scheme and $X$, $Y$ are both proper over $Z$, another notion of rational morphism with a motivic flavor presents itself. Let $\MChC(Z,\QQ)$ be the category of commutative comonoids in the category $\ChM(Z,\QQ)$ of Chow motives over $Z$ with $\QQ$ coefficients. The usual functor from the category $\SmPr(Z)$ of smooth proper schemes over $Z$ to $\ChM(Z,\QQ)$ lifts to a functor 
\[
C_* = C_*^{\MChC}: \SmPr(Z) \to \MChC(Z,\QQ).
\]
We refer to the objects of $\MChC(Z, \QQ)$ as \emph{motivic Chow coalgebras}
\footnote{
Drawing on the analogy between spectra and abelian groups, an object of a stable monoidal infinity category equipped with a unit morphism and a single binary operation (associative up to coherent higher homotopies) is often called an \emph{algebra}. Although $\ChM(Z,\QQ)$ is not itself stable, it is a full subcategory of the homotopy category of a stable monoidal infinity category. 
}
\footnote{
Actually, philosophically, a comonoid in Chow motives should only be called a motivic Chow coalgebra if it \emph{comes from geometry} in some sense. In the present work, we will only consider comonoids which come from geometry in an obvious sense. 
} and to a morphism
\[
C_*Y \to C_*X
\] 
in $\MChC(Z, \QQ)$ as a \emph{rational Chow morphism from $Y$ to $X$}. The functor $ C_*^{\MChC}$ factors through $C^*_{\DMdga}$, so that rational Chow morphisms, of interest in their own right, retain no more information than do rational motivic morphisms. Since everything in the article is \textit{rational}, we will allow ourselves to drop this adjective from our terminology, and speak of \emph{motivic morphisms} and of \emph{Chow morphisms}.

A small first application takes place in dimension 0. In the case of $X$, $Y$ finite \'etale over $Z = \Spec k$ a field of characteristic zero, a simple direct computation shows that 
\[
\Hom_Z(Y,X) = \Hom_{\MChC}(C_*Y, C_*X).
\]
(Proposition \ref{mcffe}). Combining this with our Theorem \ref{nmc}, we obtain the analogous (stronger) statement concerning motivic morphisms:

\begin{prop*}[\ref{mdffe} below]
Suppose $Z = \Spec k$ is a field and $X$, $Y$ are finite \'etale over $Z$. Then
\[
\Hom_Z(Y,X) = \Hom_{\DMdga}(C^*X, C^*Y).
\]
\end{prop*}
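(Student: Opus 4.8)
The plan is to combine Proposition~\ref{mcffe} with the formula of Theorem~\ref{nmc}, the point being that a finite \'etale $k$-scheme has an especially simple motive, so that the homotopy limit computing $\Hom_{\DMdga}(C^*X, C^*Y)$ degenerates to the plain set $\Hom_{\MChC}(C_*Y, C_*X)$. Note first that finite \'etale over the field $k$ implies smooth and proper, so that $C_*X$, $C_*Y$ and Proposition~\ref{mcffe} all apply, and that every fibre product $X \times_Z Y \times_Z \cdots \times_Z Y$ occurring below is again finite \'etale over $Z$, hence the spectrum of a finite product of finite separable extensions of $k$. For such a $0$-dimensional scheme $W$ the motive $M(W) \in \DA(Z,\QQ)$ is an \emph{Artin motive}: it is dualizable, its dual is again an Artin motive, and --- the key point --- its \emph{weight-zero} motivic cohomology is concentrated in degree $0$, $H^i_{\mot}(W,\QQ(0)) = 0$ for $i \neq 0$, the group in degree $0$ being the free $\QQ$-vector space on $\pi_0 W$. (Conceptually, the $\QQ$-linear category of Artin motives over $k$ is semisimple, which is why all higher groups vanish below.) This vanishing is the one fact we need beyond the two cited results.

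Next I would record the factorization of $C_*$ through $C^*$ mentioned in the introduction. For $X$ smooth proper the underlying motive of $C^*X$ is dualizable in $\DA(Z,\QQ)$, and dualizing turns its commutative-algebra structure into a commutative-comonoid structure, identifying $(C^*X)^\vee$ with $C_*X$; a morphism of motivic dga's $C^*X \to C^*Y$ then dualizes to a morphism of motivic Chow coalgebras $C_*Y \to C_*X$, compatibly with the functors $C^*$ and $C_*$. For finite \'etale $X$, $Y$ this yields a commutative triangle
\[
\Hom_Z(Y,X) \xrightarrow{\;C^*\;} \Hom_{\DMdga}(C^*X, C^*Y) \xrightarrow{\;(-)^\vee\;} \Hom_{\MChC}(C_*Y, C_*X)
\]
whose composite is the bijection of Proposition~\ref{mcffe}. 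Hence $C^*$ is injective on these hom-sets, and the proposition follows once we show $(-)^\vee$ is a bijection; as it is surjective by the triangle, it is enough to compute the middle term outright.

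For this I would feed the vanishing above into Theorem~\ref{nmc}, which presents the mapping space underlying $\Hom_{\DMdga}(C^*X, C^*Y)$ as a homotopy limit of a diagram of spectra built from the rational motivic cohomology of the schemes $X \times_Z Y^{\times n}$, with associated homotopy spectral sequence converging to the pointed set $\Hom_{\DMdga}(C^*X, C^*Y)$. Because $C^*X$ and $C^*Y$ are assembled from finite \'etale schemes, no Tate twist enters this diagram, so every spectrum in it has homotopy computed by \emph{weight-zero} motivic cohomology of a $0$-dimensional scheme, hence --- by the first paragraph --- is discrete with $\pi_0$ a space of $\QQ$-linear maps between free $\QQ$-modules on sets of the form $\pi_0(X \times_Z Y^{\times n})$. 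A homotopy limit of discrete spectra is discrete, equal to the ordinary limit of the $\pi_0$'s; the spectral sequence degenerates for trivial reasons; and the limit one is left with is precisely the limit computing morphisms of commutative comonoids from $C_*Y$ to $C_*X$ in Artin motives. So the edge map is a bijection $\Hom_{\DMdga}(C^*X, C^*Y) \liso \Hom_{\MChC}(C_*Y, C_*X)$, which is the map $(-)^\vee$ above; together with the triangle and Proposition~\ref{mcffe} this gives the asserted bijection $\Hom_Z(Y,X) = \Hom_{\DMdga}(C^*X, C^*Y)$ induced by $C^*$.

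The main obstacle is the bookkeeping in the last step: one has to unwind the particular diagram furnished by Theorem~\ref{nmc} carefully enough to be certain that it really involves only weight-zero motivic cohomology of $0$-dimensional schemes --- i.e.\ that neither the (co)bar-type construction nor the passage from the coaugmented object $C^*X$ to the comparison with its dual coalgebra $C_*X$ secretly introduces a Tate twist (or an unbounded range of shifts whose individual terms do not vanish) --- and then to match the resulting degenerate limit with $\Hom_{\MChC}(C_*Y, C_*X)$ compatibly with $(-)^\vee$. Once the diagram is laid out, the remaining ingredients --- dualizability of Artin motives, concentration of weight-zero motivic cohomology in degree $0$, and the fact that a homotopy limit of discrete spectra computes the limit of their $\pi_0$'s --- are routine.
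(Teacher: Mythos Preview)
Your proposal is correct and follows essentially the same route as the paper: apply the monadic resolution formula (Theorem~\ref{nmc}) in the proper case with $d=0$, use the vanishing of $K_i^{(0)}$ for $i\neq 0$ to see that every term in the cosimplicial diagram is discrete so the homotopy limit is an ordinary equalizer computing $\Hom_{\CAlg\DA}(C^*X,C^*Y)=\Hom_{\MChC}(C_*Y,C_*X)$, and then invoke Proposition~\ref{mcffe}. One minor slip: the schemes appearing in the diagram are $X^{S_1}\times Y$ (powers of $X$, not of $Y$), though this is immaterial since both are finite \'etale.
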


\noindent
Although we are not ready to state a precise conjecture at this point, we do not expect a similar result for positive dimensional schemes. 

\segment{}{Relation to Chabauty--Kim theory}
\label{mad2}

This work is the second installment in our ongoing investigation of \textit{Kim's conjecture}. We refer the reader to Kim et al. \cite{nats} for the statement of the conjecture (and to \textit{Rational motivic path spaces} \cite{RMPS} for an overview of our program) and restrict attention here to the conjecture's relevant consequences.

To fix ideas, let's focus on the case $Z \subset \Spec \ZZ$ open. If $p$ is a prime of $Z$, an analogous construction to the one outlined above gives rise to a category $\DMdga(Z_p, \QQ)$ of motivic dga's over the complete local scheme $Z_p$ of $Z$ at $p$, and the associated functor $C^*_{\DMdga}$ commutes with the global one. We thus obtain a commuting square
\[
\tag{*}
\xymatrix{
X(Z) \ar[r] \ar[d] & X(Z_p) \ar[d]^\al
\\
\Hom_{\DMdga(Z,\QQ)}
	(C^*X, \one)
	\ar[r]_-{\la} &
	\Hom_{\DMdga(Z_p,\QQ)}
	(C^*X_p, \one).
}
\]
Here $X_p$ denotes the base-change of $X$ to $Z_p$. Kim's conjecture implies that when $X$ is a hyperbolic curve, the natural injection of sets
\[
X(Z) \subset \al\inv(\Im \la)
\]
from integral points to ``motivically integral points'' is in fact bijective:

\begin{ssconj}
\label{mad2.5}
In the situation and the notation of segment \ref{mad2} with $X$ a hyperbolic curve over $Z$, we have
\[
X(Z) = \al\inv(\Im \la).
\]
\end{ssconj}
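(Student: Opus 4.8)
The inclusion $X(Z) \subseteq \al\inv(\Im \la)$ is immediate from the commutativity of the square $(*)$, so the task is to produce the reverse inclusion, and the plan is to obtain it from Kim's conjecture \cite{nats} after translating the language of motivic dga's into that of the motivic unipotent fundamental group. The first and essential step is to invoke the main comparison of \textit{Rational motivic path spaces} \cite{RMPS}: up to the subtleties recorded below, it identifies the pointed set $\Hom_{\DMdga(Z,\QQ)}(C^*X,\one)$ of motivic points of $X$ with the set of isomorphism classes of torsors under the motivic unipotent fundamental group of $X$ that ``come from geometry'' --- equivalently, with the $\QQ$-points of Kim's motivic Selmer variety --- and, applied over the complete local base, identifies $\Hom_{\DMdga(Z_p,\QQ)}(C^*X_p,\one)$ with the analogous space of de Rham path torsors equipped with a crystalline Frobenius, in the sense of \cite{kimi, kimii}.

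Granting such identifications, I would next check that they are natural with respect to the base change from $Z$ to $Z_p$, so that the horizontal map $\la$ of $(*)$ goes over to the arithmetic localization from global torsors to local ones, and the vertical map $\al$ goes over to the $p$-adic unipotent Albanese map on $X(Z_p)$. Under this dictionary, $\al\inv(\Im\la)$ becomes precisely the Chabauty--Kim locus cut out inside $X(Z_p)$ by the images of the global points --- in the finite-level picture of \cite{kimi, kimii}, the intersection over all $n$ of the subsets $X(Z_p)_n$ --- and $X(Z)$ sits inside it as the set of integral points. Kim's conjecture, in the formulation of \cite{nats}, asserts exactly that this locus equals $X(Z)$, which is the desired statement.

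The hard part will be the first step: making the comparison of \cite{RMPS} precise and, above all, functorial in the base. Several points deserve care. One must follow the comparison through the base change to $Z_p$ and, on the $p$-adic side, match augmentations of $C^*X_p$ with de Rham torsors \emph{carrying a Frobenius}, so that $\al$ is genuinely Kim's Albanese map rather than a coarser invariant; one also needs the comparison to be injective enough that $\Im\la$ is faithfully reflected on the two sides of the dictionary. The most delicate point is pinning down the ``comes from geometry'' condition that cuts the motivic Selmer variety out of the full torsor space: a priori, $\Hom_{\DMdga}(C^*X,\one)$ does not record the weight filtration globally or the Frobenius locally, so one must either show that these structures are automatic on augmentations of $C^*X$ or else restate Kim's conjecture so that its Selmer conditions are exactly the ones visible to motivic dga's. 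Once this bookkeeping is in place, Conjecture \ref{mad2.5} follows from Kim's conjecture; the spectral sequence of our main theorem, Theorem \ref{nmc}, is not needed for the implication itself, though it is what would eventually render $\Hom_{\DMdga(Z,\QQ)}(C^*X,\one)$ open to explicit computation.
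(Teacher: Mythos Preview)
The statement you are addressing is a \emph{conjecture} in the paper, not a theorem; the paper gives no proof of it. What the paper does is assert, in the surrounding text, that Conjecture~\ref{mad2.5} is a consequence of Kim's conjecture, and the entire justification offered is the sentence pointing to \cite{RMPS}: ``rational motivic points contain enough information to retrieve the groupoid of homotopy classes of motivic paths.'' That is, the paper's argument for the implication is exactly the dictionary you invoke in your first paragraph, asserted rather than verified.

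Your sketch is therefore not so much a different approach as an honest expansion of what the paper leaves implicit. In fact you go further than the paper does: the issues you flag in your final paragraph --- functoriality of the comparison in the base, matching $\al$ with the $p$-adic unipotent Albanese map including the Frobenius structure, and aligning the Selmer-type conditions visible to $\DMdga$ with those in Kim's formulation --- are genuine and are not addressed in the paper. The paper simply takes the implication as plausible motivation and moves on; it does not claim to have checked these compatibilities. So your caveats are well placed, but be aware that there is no ``paper's own proof'' to compare against: you are filling in an outline the authors deliberately left as an outline.
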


\ssegment{mad2b1}{}
In fact, in the setting of conjecture \ref{mad2.5}, there appears to be little reason to restrict attention to sections of the structure morphism $X \to Z$. We could equally consider two smooth schemes $X$ and $Y$ over $Z$ and the associated square 
\[
\tag{$*_Y$}
\xymatrix{
\Hom_Z(Y,X)
	\ar[r] \ar[d] & 
	\Hom_{Z_p}(Y_p,X_p)
	\ar[d]^\al
\\
\Hom_{\DMdga(Z,\QQ)}
	(C^*X, C^*Y)
	\ar[r]_-{\la} &
	\Hom_{\DMdga(Z_p,\QQ)}
	(C^*X_p, C^*Y_p),
}
\]
and we may hope that a bijection 
\[
\tag{$**_Y$}
\Hom_Z(Y,X) = \al\inv(\Im \la)
\]
holds \emph{for many schemes $X,Y$}. Ineed, Kim's conjecture takes part of its inspiration from Grothendieck's section conjecture. The latter, in turn, forms a small part of Grothendieck's anabelian geometry. In this setting, the functor $\pi_1$, suitably interpreted, is expected to be close to a fully faithful embedding of a large part of the category of $Z$-schemes in the category of profinite groups. To our knowledge, it is reasonable to expect a similar generalization of Kim's conjecture, which would thus apply to morphisms of anabelian schemes. 

It is worth noting, however, that while an anabelian scheme is supposed to represent the arithmetic analog of a $K(\pi,1)$, equation ($**_Y$) bears no particular relationship to $\pi_1$. So we should expect the class of schemes for which it holds to be, if anything, larger than the class of anabelian schemes.

\segment{i2}{Relation to K-theory, main theorem}

The set
\[
\Hom_{\DMdga(Z,\QQ)}
	(C^*X, C^*Y)
\]
presents itself naturally as the set of connected components $\pi_0$ of a topological space. The space in question is the space of morphisms
\[
\Hom_{\DDMdga(Z,\QQ)}
	(C^*X, C^*Y)
\]
in the infinity category
\[
\DDMdga(Z,\QQ)
\]
associated to the model category $\Mdga(Z,\QQ)$. Monadic resolution allows us to relate this space to the spaces
\[
\Hom_{\DDA(Z,\QQ)}
	(C^*X^n, C^*Y)
\]
of morphisms in the infinity category $\DDA(Z,\QQ)$ associated to the model category $M(Z,\QQ)$. In turn, by work of Morel-Voevodsky \cite{MorVod}, Ayoub \cite{AyoubSixI, AyoubSixII}, Cisinski-D\'eglise \cite{CisDeg}, these latter spaces are the infinite loop spaces associated to Adams eigenspaces of rational K-theory spectra
\[
\Kk^{(i)}(X^n \times Y),
\]
at least when $X$ is proper.
\footnote{When $X$ is not proper, one must take the cofiber of the complement inside a compactification; see proposition \ref{mosc1} for the precise statement under stringent assumptions on $X$.}
\footnote{In segment \ref{ad1} below, we \textit{define} $\Kk^{(i)} := \QQ(i)[2i]$ and discuss the relationship to K-theory. See Riou \cite{RiouThese} for a less tautological definition.}

The resulting formula is our main theorem:

\begin{ssthm}[Comparison with K-theory]
\label{elal1}
Suppose $X$, $Y$ are smooth schemes over $Z$. Assume $X$ is proper. Then we have
 \[
\Hom_{\DDMdga}(C^*X, C^*Y) =
 \lim_{k \in \Delta}
 \Omega^\infty
  \prod_{[S = (S_1 \to \cdots \to S_k)] \in \pi_0(\Fin^k_\gr)}
  \big(  \Kk^{(ds_1)} (X^{S_1}\times Y) \big)^{\Aut S}.
 \]
Here $\Fin^k_\gr$ denotes the (1-)category whose objects are sequences of morphisms of finite sets and whose morphisms are isomorphisms of diagrams, and $s_1$ denotes the cardinality of the finite set $S_1$.\footnote{We place an $\Om^\infty$ between the two limits in order to emphasize the fact that while the inner limit may be taken inside the category of spectra, the $\Delta$-shaped diagram of the outer limit contains morphisms which are \emph{not} morphisms of spectra.}
\end{ssthm}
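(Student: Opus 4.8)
The plan is to express the mapping space in $\DDMdga$ as a limit over the cobar/monadic resolution of the free-forgetful adjunction between $M(Z,\QQ)$ and $\Mdga(Z,\QQ)$, and then identify each term with a product of K-theory spectra indexed by combinatorial data. First I would recall that $C^*X$, being a commutative monoid, admits a canonical monadic (bar) resolution: writing $T$ for the free commutative monoid monad on $M(Z,\QQ)$, one has $C^*X \simeq |T^{\bullet+1} C^*X|$ as a simplicial object of $\Mdga(Z,\QQ)$, and hence
\[
\Hom_{\DDMdga}(C^*X, C^*Y) \simeq \lim_{k \in \Delta} \Hom_{\DDMdga}(T^{k+1} C^*X, C^*Y).
\]
By the free-forgetful adjunction, $\Hom_{\DDMdga}(T^{k+1}C^*X, C^*Y) \simeq \Hom_{\DDA}((\text{underlying object of } T^k C^*X), C^*Y)$, so the outer $\lim_{\Delta}$ is taken over a cosimplicial space whose terms are mapping spaces in the \emph{linear} infinity-category $\DDA(Z,\QQ)$ — this is the content of the footnote about the outer $\Delta$-diagram not being a diagram of spectra.

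Next I would unwind the free commutative monoid monad. Since $C^*X = \bigoplus$ (in the appropriate derived/motivic sense, using that $X$ is proper so $C^*X$ is dualizable) the iterated free construction $T^k C^*X$ decomposes, after taking symmetric powers, as a direct sum indexed by $\pi_0(\Fin^k_\gr)$ — isomorphism classes of length-$k$ chains $S_1 \to \cdots \to S_k$ of finite sets — with the summand attached to $[S]$ being the $\Aut S$-invariants of $C^*(X^{S_1})$, the factor $X^{S_1}$ appearing because the outermost symmetric power is applied $s_1 = |S_1|$ times to copies governed by the chain, and the symmetric/monoidal Tannakian formalism turns the naive symmetric power into homotopy invariants. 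This is the standard combinatorics of iterating $\Sym$ (trees/forests of finite sets), and the passage from "strict" to "homotopy" (co)invariants is legitimate because we work rationally. Here one uses that $\Hom_{\DDA}(-, C^*Y)$ sends this direct sum to a product, giving the product over $\pi_0(\Fin^k_\gr)$ in the statement.

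Finally I would identify each factor $\Hom_{\DDA(Z,\QQ)}(C^*(X^{S_1}), C^*Y)$ with an infinite loop space of a K-theory spectrum. Using that $X$ (hence $X^{S_1}$) is proper, so that $C^*(X^{S_1})$ is strongly dualizable with dual $C_*(X^{S_1})$, the mapping space is $\Hom_{\DDA}(\one, C^*Y \otimes C_*(X^{S_1})) = \Hom_{\DDA}(\one, C^*(X^{S_1} \times Y))$, which by Morel–Voevodsky / Ayoub / Cisinski–Déglise is $\Omega^\infty$ of the motivic cohomology / K-theory spectrum; extracting the Adams eigenspace corresponding to the total degree $2 d s_1$ (where $d$ is the ambient dimension shift coming from the duality twist on $X^{S_1}$) gives exactly $\Omega^\infty \Kk^{(d s_1)}(X^{S_1}\times Y)$. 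Taking $\Aut S$-invariants commutes with $\Omega^\infty$ and with passing to the eigenspace summand since we are rational. Assembling: the product over $[S]\in\pi_0(\Fin^k_\gr)$ of these, inside a single $\Omega^\infty$, is the $k$-th term of the cosimplicial space, and taking $\lim_{k\in\Delta}$ yields the displayed formula.

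The main obstacle is the second step: making precise the claim that the iterated free commutative monoid monad, applied to the motivic dga $C^*X$, decomposes as the asserted $\pi_0(\Fin^k_\gr)$-indexed sum of $\Aut$-invariants of $C^*(X^{S_1})$ — in particular getting the indexing category (chains of finite sets, morphisms = isomorphisms of diagrams), the appearance of $X^{S_1}$ rather than some larger product, and the exponent $ds_1$ all correct, and checking that the simplicial structure maps on both sides match so that the outer limit is genuinely over $\Delta$ with the stated face/degeneracy maps. Everything else — monadic resolution, the adjunction, duality for proper $X$, and the K-theory identification — is either formal or cited.
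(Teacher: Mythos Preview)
Your outline is correct and follows the paper's approach: monadic resolution (Theorem~\ref{monresabs}) followed by the K-theory identification via Poincar\'e duality (Proposition~\ref{mosc1}). The obstacle you flag --- the decomposition $M^k E \simeq \colim_{\Fin^k_\gr} E^{\otimes S_1}$, compatibly with the cosimplicial structure --- is exactly Proposition~\ref{Wit}, which the paper proves by an inductive argument through free symmetric monoidal presheaf categories rather than by direct ``iterated $\operatorname{Sym}$'' combinatorics (the introduction explicitly notes that the latter runs into $\infty$-categorical functoriality trouble).
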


\noindent
The full statement, in theorem \ref{nmc} below, includes a more general case in which $X$ is not proper (including for instance $\thrpl$), and gives a ``K-theoretic'' construction of the cosimplicial space appearing under the outer limit. As a corollary we obtain a spectral sequence whose first few pages may be described in terms of the K-groups
\[
K^{(dn)}_{i} (X^n \times Y) 
\]
and which abutts to the homotopy groups of the space of morphisms of motivic dga's (\S\ref{lead}).

\segment{}{Monadic resolution}
The comparison theorem (\ref{elal1}) follows via well known (deep) theorems and tried and true techniques from a general statement:

\begin{thm*}[Monadic resolution]
Let $\Cc^\otimes$ be a presentable closed symmetric monoidal
 infinity category, let $\CAlg \Cc$ denote the category of commutative algebras in $\Cc^\otimes$. Let $A$, $B$ be algebra objects with underlying objects $\underline A$, $\underline B$.  Then
\[
\Hom_{\CAlg \Cc}(A,B) =
 \lim_{k \in \Delta} \lim_{S_1 \to \cdots \to S_k}
 \Hom_{\Cc}(\underline A^{\otimes S_1}, \underline B).
\]

\end{thm*}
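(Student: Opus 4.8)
The plan is to express the space of algebra maps as a totalization over the bar resolution coming from the free–forgetful adjunction, and then to identify each level of that cosimplicial space with the product over sequences of finite sets of mapping spaces in $\Cc$.

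First I would set up the relevant adjunction. Let $\Free \dashv U$ be the free commutative algebra / forgetful adjunction between $\Cc^\otimes$ and $\CAlg\Cc$, which exists because $\Cc^\otimes$ is presentable and the tensor product preserves colimits in each variable; the induced monad $T = U\Free$ on $\Cc$ is given by $T(X) = \coprod_{n \ge 0} (X^{\otimes n})_{h\Si_n}$. The comparison functor $\CAlg\Cc \to \Comod_T$ (or rather the monadic picture for algebras over a monad) is an equivalence by the Barr–Beck–Lurie theorem: $U$ is conservative and preserves $U$-split (indeed all sifted) colimits. Consequently, for any algebra $B$, the canonical map from $B$ to the totalization of its cobar/bar resolution
\[
B \;\xrightarrow{\ \sim\ }\; \Tot\big( \Free U B \rightrightarrows \Free U \Free U B \cdots \big)
\]
is an equivalence in $\CAlg\Cc$, where the cosimplicial object in degree $k$ is $(\Free U)^{k+1}B$.

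Next I would compute the mapping space out of $A$ into this resolution. Since $\Hom_{\CAlg\Cc}(A,-)$ is a right adjoint on the level of spaces (it preserves limits), it carries the totalization to a limit over $\Delta$, giving
\[
\Hom_{\CAlg\Cc}(A,B) \;=\; \lim_{k\in\Delta} \Hom_{\CAlg\Cc}\big(A,\ \Free(U\Free)^{k}UB\big).
\]
Now I would use the free–forgetful adjunction at the outermost slot: a map $A \to \Free(W)$ in $\CAlg\Cc$ is the same as a map $\underline A \to W$ in $\Cc$ only after one more step — rather, I should instead resolve $A$ by free algebras as well, or, more efficiently, use that $\Hom_{\CAlg\Cc}(A, \Free W) = \Hom_{\CAlg\Cc}(A,\Free W)$ and rewrite $\Free(U\Free)^kUB = \Free(W_k)$ with $W_k := (U\Free)^kUB$, then note $(U\Free)^k U B = T^k(\underline B)$. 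This still has a $\Free$ on the outside, so to strip it I would apply the adjunction after first replacing $A$ by its own monadic resolution, i.e. run the same bar construction on $A$; combining both, $\Hom_{\CAlg\Cc}(A,B)$ becomes a double limit over $\Delta\times\Delta$ of $\Hom_\Cc(\underline A^{\otimes \bullet}, T^\bullet\underline B)$, and an Eilenberg–Zilber/cofinality argument collapses the two $\Delta$'s appropriately. Cleaner: resolve only $B$, and observe that $\Hom_{\CAlg\Cc}(A,\Free W)\simeq \Hom_\Cc(U_{/\!/}A, W)$ is false in general, so the honest route is to expand $T^k\underline B = \coprod$ over composable surjections (or all maps) of finite sets of tensor powers. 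Concretely, $T(X) = \coprod_{n\ge0}(X^{\otimes n})_{h\Si_n}$, so $T^{k}(X)$ is a coproduct indexed by sequences $S_1 \to \cdots \to S_k$ of finite sets (each arrow recording how the inner $\Si$-orbits are grouped), homotopy-quotiented by the automorphisms $\Aut S$ of such a sequence, of $X^{\otimes S_1}$. Mapping $\underline A^{\otimes ?}$ in turn produces, after the free–forgetful adjunction strips the outer $\Free$, exactly $\Hom_\Cc(\underline A^{\otimes S_1}, \underline B)$; the coproduct over the groupoid $\Fin^k_\gr$ in the source turns into the product $\lim_{S_1\to\cdots\to S_k}$ in the mapping space, and the homotopy quotients become homotopy fixed points $\big(-\big)^{\Aut S}$, which are absorbed into the limit over the groupoid $\Fin^k_\gr$. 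Assembling over $k\in\Delta$ yields the claimed formula
\[
\Hom_{\CAlg\Cc}(A,B) \;=\; \lim_{k\in\Delta}\ \lim_{S_1\to\cdots\to S_k}\ \Hom_\Cc(\underline A^{\otimes S_1},\underline B).
\]

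The main obstacle, and the step I would spend the most care on, is the bookkeeping that turns the iterated free monad $T^k$ into the indexing groupoid of composable sequences of finite sets together with the correct automorphism actions, and making sure the homotopy colimits (homotopy $\Si_n$-quotients) in the source dualize to the homotopy limits / homotopy fixed points in the mapping space — this is where the subtlety of working $\infty$-categorically rather than with strict (co)monads really bites, and where one must invoke that $\Hom_\Cc(-,\underline B)\colon \Cc^{\mathrm{op}}\to\Spaces$ sends colimits to limits. A secondary technical point is justifying Barr–Beck–Lurie in this generality (conservativity of $U$ and preservation of the relevant colimits), but for presentable closed symmetric monoidal $\Cc^\otimes$ this is standard (Lurie, \emph{Higher Algebra}). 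The passage from the $\Delta$-indexed totalization to a statement about $\pi_0$ and the ensuing spectral sequence, needed later for Theorem \ref{elal1}, is then formal.
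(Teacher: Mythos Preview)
Your proposal has the right ingredients but assembles them in the wrong order, and this creates a genuine gap.

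The comonad $W = \Free\circ U$ on $\CAlg\Cc$ produces, for any algebra, a \emph{simplicial} object $W^{\bullet+1}(-)$ whose geometric realization recovers the algebra; it is not a cosimplicial object with the algebra as totalization. The paper applies this to $A$, not to $B$: from $A \simeq \colim_{k\in\Delta\op} (FU)^{k+1}A$ one gets
\[
\Hom_{\CAlg\Cc}(A,B) \;\simeq\; \lim_{k}\Hom_{\CAlg\Cc}\big((FU)^{k+1}A,\,B\big)
\;=\;\lim_k \Hom_{\CAlg\Cc}\big(F\,M^k\underline A,\,B\big)
\;=\;\lim_k \Hom_{\Cc}\big(M^k\underline A,\,\underline B\big),
\]
where the outermost $F$ is stripped by the adjunction in one step. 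Your route --- resolving $B$ --- lands you at $\Hom_{\CAlg\Cc}(A,\Free W_k)$, which (as you yourself observe) does not simplify; the sentence in which you then produce $\Hom_\Cc(\underline A^{\otimes S_1},\underline B)$ with ``the coproduct in the source'' does not follow from anything you wrote, since all your iterated $T$'s are sitting on $\underline B$ in the target. Resolving $A$ instead fixes this immediately and makes the double-$\Delta$ and Eilenberg--Zilber contortions unnecessary.

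On the second point: you are right that the identification $M^kE \simeq \colim_{\Fin^k_\gr} E^{\otimes S_1}$ is where the real work lies, and your sketch of it is not yet a proof. Naively iterating $T(X)=\coprod_n (X^{\otimes n})_{h\Si_n}$ requires commuting the tensor product past the inner colimits \emph{functorially in the outer finite set}, and then identifying the resulting indexing groupoid with $\Fin^k_\gr$; this functoriality is exactly the step that resists a direct attack. The paper handles it by an induction on $k$ together with a universal-property argument: one shows abstractly that for any functor $f:\Dd\to\Cc$ there is an induced $Mf: M\Dd\to\Cc$ with $M(\colim_\Dd f)\simeq\colim_{M\Dd}Mf$, where $M\Dd = UF\Dd$ is the underlying groupoid of the free symmetric monoidal category on $\Dd$. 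This reduces the question to checking $M(\Fin^k_\gr)\simeq\Fin^{k+1}_\gr$ and $M(\Om^k_E)\simeq\Om^{k+1}_E$, which are controlled by $1$-categorical combinatorics. Your ``coproduct indexed by sequences, quotiented by $\Aut S$'' is the correct answer, but the argument you give does not establish it.
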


\noindent
A direct approach to this general statement, however, leads to trouble. Let
\[
M: \Cc \to \Cc
\]
denote the monad associated to the adjunction between the free algebra functor and the forgetful functor and let
 $W: \CAlg \Cc \to \CAlg \Cc$ denote the comonad. By (co)monadic resolution, there's an equivalence in $\CAlg  \Cc$
\[
\colim_{k \in \Delta} W^{k+1} A \xto{\sim} A,
\]
hence an equivalence
\begin{align*}
\Hom_{\CAlg \Cc}(A,B) &=
	\lim_{k \in \Delta} \Hom_{\Cc} 
		(M^k \underline A, \underline B)
\end{align*}
in $\Ss$. We must show that if $E = \underline A$ is an object of $\Cc$ and $k \ge 1$ then 
\[
M^k E =
\underset{S = (S_1 \to \cdots \to S_{k}) \in \Fin_\gr^{k}} 
\colim
 E^{\otimes S_1}
 = \colim \Om^k_E
\]
where
\[
\Om_E^k(S_1 \to \cdots \to S_k) = E^{\otimes S_1}.
\]

Starting with the case $k=1$, we have
\[
ME = \coprod_{n \in \NN} E^{\otimes n}/ \Si_n 
= \underset{S \in \Fin_\gr} \colim \;
 E^{\otimes S}
\]
because the groupoid of sets of size $n$ is equivalent to $B\Si_n$. Turning to the case $k=2$, we begin by fixing a finite set $S$ and computing:
\begin{align*}
(\underset T \colim \; E^{\otimes T})^{\otimes S}
&=  
 \bigotimes_{s \in S} 
\underset{T_s} \colim \; E^{\otimes T_s}
\intertext{which is just a change of notation,}
\\
&=  
\underset{ (T_s)_{s\in S} } \colim \; 
\bigotimes_{s \in S} E^{ \otimes T_s} 
\intertext{by compatibility of tensor products with colimits,}
\\
&= 
\underset{ (T_s)_{s\in S} } \colim \; 
E^{ \otimes ( \coprod_{s \in S} T_s )  }.
\end{align*}
If we could make these equalities functorial in $S$, we could take colimits over $S$ to obtain 
\[
M^2E = \underset{T \to S} \colim \; E^{\otimes T}.
\]
Unfortunately, we are unable to construct such a functor directly. Instead, we write the induction step as follows:
\begin{align*}
M^{k+1}E 	&= M(M^kE) \\
		&= M \left( \colim_{\Fin^k_\gr} \Om_E^k \right) \\
		& \overset != \colim_{M(\Fin^k_\gr)} ( M\Om^k_E) \\
		&= \colim_{\Fin_\gr^{k+1}} \Om_E^{k+1}.
\end{align*}
The claimed equality (!) is best stated in an abstract setting. Let $\Cc^\otimes$ be a presentable 
 symmetric monoidal infinity category, let $\Dd$ be an infinity category, and let 
\[
f : \Dd \to \Cc
\]
be a functor to the underlying category. Let
\[
M\Dd = UF \Dd
\]
denote the underlying category of the free symmetric monoidal infinity category generated by $\Dd$. Then there's an induced functor 
\[
Mf: M\Dd \to \Cc
\]
and an equivalence 
\[
M(\colim_\Dd f) = \colim_{M\Dd} Mf
\]
in $\Cc$. To prove this, we treat $\Cc^\otimes$ as a variable, we use the adjunction between symmetric monoidal infinity categories and presentable symmetric monoidal infinity categories with functors that preserve small colimits, and we reduce to establishing the commutativity
\[
\xymatrix{
\Pp(F \Dd) \\
\Pp(FUF\Dd) \ar[u] &
\Pp(F\{*\}) \ar[l] \ar[ul]
}
\]
of a certain diagram of categories of presheaves. As it turns out, the latter is essentially controlled by 1-categories.

\segment{i3}{Proper case, motivic Chow coalgebras}

As mentioned above, when $X$ is proper and $Z = \Spec k$ is a field, we may replace the motivic complex $C^*_{\DA}(X) \in \DA(Z,\QQ)$ associated to $X$, by the Chow motive $C^*_{\ChM}(X) \in \ChM(Z,\QQ)$, an object of the additive tensor category of Chow motives over $Z$ with $\QQ$-coefficients. There is again a natural monoid structure. Actually, we prefer in this setting to work with the homological motive $C_*^{\ChM}(X)$, which then has a structure of \textit{co}monoid. By considering morphisms of Chow motives which preserve this extra structure, we obtain the category
\[
\MChC(Z,\QQ)
\]
of motivic Chow coalgebras mentioned earlier, and a functor 
\[
C_* = C_*^{\MChC}: \SmPr(Z)
 \to \MChC(Z,\QQ)
\]
from smooth proper schemes to motivic Chow coalgebras. As mentioned above, this functor factors through the functor $C_{\DMdga}^*$.

In particular, if $Y$ is also smooth and proper over $Z$, we obtain a commuting diagram
\[
\tag{$*_{Y,Ch}$}
\xymatrix{
\Hom_Z(Y,X) 
	\ar[d] \ar[r] &
	\Hom_{Z_p}(Y_p, X_p) 
	\ar[d]^-\al
	\ar@<5ex>@/^9ex/[dd]^-{\al_{Ch}}
\\
\Hom_{\DMdga(Z,\QQ)}
	(C^*X, C^*Y)
	\ar[r]_-{\la} 
	\ar[d] &
	\Hom_{\DMdga(Z_p,\QQ)}
	(C^*X_p, C^*Y_p)
	\ar[d]
\\
\Hom_{\MChC(Z,\QQ)}
	(C_*Y, C_*X)
	\ar[r]_-{\la_{Ch}} &
	\Hom_{\MChC(Z_p,\QQ)}
	( C_*Y_p, C_*X_p)
}
\]
and equation \ref{mad2b1}($**_Y$) implies that
\[
\tag{$**_{Y,Ch}$}
\Hom_Z(Y,X) = \al_{Ch} \inv (\Im \la_{Ch}).
\]
So for instance, Kim's conjecture implies this equality when $Y = Z = \Spec k$ is a number scheme and $X$ is a proper hyperbolic curve.

Our work on the case of dimension zero shows that in that case, ($**_{Y,Ch}$) holds, and then uses this result to show that in the same case \ref{mad2b1}($**_Y$) holds. In fact, in terms of the development of our work, equation ($**_{Y,Ch}$) came first, the idea to start with the case of finite \'etale $k$-schemes, and the ensuing expectation that in that case we have an equivalence of rational points and motivic points came next, and the work presented here followed. So, at least on a small, personal, scale, we've seen Kim's conjecture help to \textit{predict} a new phenomenon.

\segment{i5}{Relationship to motivic cohomology}

Although there is a priori interest in the vector spaces
\[
\Ext^i_{MM} \big(\QQ(0),  H_j(X)(k)\big)
\]
of extensions in the conjectural Tannakian category of mixed motives 
(for instance, in connection with the Beilinson conjectures, and with Chabauty's method (never mind if the `$j$' should be upper or lower)), it is the associated RHom's 
\[
\Hom_{\DA} \big( C^*X, \QQ(n)[m] \big)
\]
that are usually referred to as \emph{motivic homology} and form the bridge to K-theory. 

Let $\pi^{R}_1(Z)$ denote any of the Tannakian fundamental groups commonly considered. (If the category $MM$ were available, we would take $R=MM$; Kim essentially considers a category of Galois representations obeying certain local conditions.) Let $\pi_1^\un(X)^R$ denote the associated realization of the unipotent fundamental group of $X$. Then the abelianization of $\pi_1^\un(X)^R$ is the associated first homology group $H_1^R(X)$, and 
\[
H^1\big( \pi_1^R(Z), H_1^R(X) \big) 
	= \Ext^1_R \big(
		\QQ(0), H_1^R(X) \big).
\]
Moreover, there's a commutative diagram
\[
\xymatrix
@R=3ex
{
\mbox{Nonabelian}
	&
	\mbox{Abelian}
\\
X(Z) \ar[d] \ar@{=}[r]
	&
	X(Z) \ar[d]
\\
\Hom_{\DMdga}(C^*X, \one)
	\ar[d] \ar[r]
	&
	\Hom_{\DA}
		\big( C^*X, \QQ(0) \big)
	\ar[d]
\\
H^1\big( \pi_1^R(Z), \pi_1^\un(X)^R \big)
	\ar[r] 
	&
	\Ext^1_R \big(
		\QQ(0), H_1^R(X) \big)
}
\]
in which our factorization of Kim's \emph{unipotent Kummer map} through \emph{motivic points} maps to an analogous factorization of the abelian Kummer map.\footnote{
Indeed, from this point of view, a morphism
$
C^*X \to \QQ(0)
$
in $\DA(Z,\QQ)$ might be called a ``rational \textit{linear} motivic point''.
} So it's reasonable to say that the Ext groups of Chabauty's method stand to (abelian) motivic homology as the nonabelian cohomology groups 
\[
H^1\big( \pi_1^R(Z), \pi_1^\un(X)^R \big),
\]
which form the basis for Chabauty-Kim theory, stand to the pointed sets
\[
\Hom_{\DMdga}(C^*X, \one)
\]
considered above.\footnote{
A point which may cause confusion is that the pointed set $H^1\big( \pi_1^R(Z), \pi_1^\un(X)^R \big)$ is a \emph{co}homology set in the sense of group cohomology (that is, when considered as a functor of  $\pi_1^R(Z)$), but is covariant as a functor of $X$.
}
 It is interesting to note, that data about tensor powers of the first homology of $X$ (which sometimes gives rise to the higher Tate objects $\QQ(n)$) is enfolded into the latter, and appears in the graded quotients of the unipotent fundamental group.

\subsection*{Aknowlegements}
We heartily thank Shane Kelly for helpful conversations throughout the development of this work. We would also like to acknowledge our intellectual debt to Marc Levine; in particular, our notion of \textit{motivic Chow coalgebra} and the question of its relationship to motivic dga's grew out of conversations with him.

\section{Monadic resolution}

\segment{}{}
Let $Z$ be a separated Noetherian scheme. We let $M(Z, \QQ)$ denote the category of spectra of complexes of presheaves with \'etale-local, $\AA^1$-local, $\QQ(1)$-stable model structure, used for instance in Ayoub \cite[\S3]{AyoubEt} to construct the \emph{triangulated category
\[
\DA^\et(Z,\QQ) = \Ho M(Z,\QQ)
\]
 of \'etale $Z$-motives}. Recall from section 1 of \textit{Rational motivic path spaces} \cite{RMPS} that the category $\Mdga(Z,\QQ)$ of commutative monoid objects in $M(Z,\QQ)$ inherits a model structure; its homotopy category $\DMdga(Z,\QQ)$ is the category of \emph{motivic dga's}.
 
In this article we depart from the model categorical language of \cite{RMPS}, and rely rather heavily on the language of infinity categories, as formulated by Lurie \cite{LurieTopos, LurieHA}. There exists a stable monoidal infinity category $\DDA(Z,\QQ)$ factorizing the morphism 
\[
M(Z,\QQ) \to \DA(Z,\QQ)
\]
such that $\pi_0 \DDA(Z,\QQ) = \DA(Z,\QQ)$, as well as a monoidal infinity category $\DDMdga(Z,\QQ)$ factorizing the morphism
\[
\Mdga(Z,\QQ) \to \DMdga(Z,\QQ)
\]
such that $\pi_0 \DDMdga(Z,\QQ) = \DMdga(Z,\QQ)$. Moreover,  there is a canonical equivalence
\[
\DDMdga(Z,\QQ) =
 \CAlg \big( \DDA(Z,\QQ) \big).
\]

\segment{a1}{}
We let $\Delta$ denote the category of weakly totally ordered nonempty sets.  We denote the object $\big( \{0, \dots, n\}, {\le} \big)$ by $[n]$ ($n \ge 0$). We let
\[
\Fin^k_\gr = (\Fin^{\Delta^{k-1}})_\gr
\]
denote the category whose objects are sequences of maps of finite nonempty sets
\[
S_1 \to \cdots \to S_k.
\]
If $\Cc$ is an $\infty$-category, and $E,F$ are objects, we denote by 
$
\Hom_\Cc(E,F)
$
the \emph{space} of morphisms from $E$ to $F$, an object of the $\infty$-category $\Ss$ of spaces of \cite[Chapter I]{LurieTopos}.

\begin{sprop}
\label{monres}
Let $X$, $Y$ be smooth schemes over $Z$. Then there is an isomorphism of homotopy types
\[
\Hom_{\DDMdga}(C^*X, C^*Y) =
 \lim_{k \in \Delta} \lim_{(S_1 \to \cdots \to S_k) \in \Fin^k_\gr}
 \Hom_{\DDA}(C^*X^{\otimes S_1}, C^*Y).
\]
\end{sprop}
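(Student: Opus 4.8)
The plan is to reduce this to the abstract Monadic Resolution statement of the introduction and then prove the latter. Recall from the excerpt that $\DDA(Z,\QQ)$ is a presentable closed symmetric monoidal $\infty$-category, that $\DDMdga(Z,\QQ) = \CAlg\big(\DDA(Z,\QQ)\big)$, and that $C^*X$, $C^*Y$ are commutative algebra objects whose underlying objects in $\DDA(Z,\QQ)$ I also denote $C^*X$, $C^*Y$. Thus it suffices to prove that for any presentable closed symmetric monoidal $\infty$-category $\Cc^\otimes$ and any $A, B \in \CAlg\Cc$ with underlying objects $\underline A$, $\underline B$, one has
\[
\Hom_{\CAlg\Cc}(A,B) = \lim_{k\in\Delta}\lim_{(S_1\to\cdots\to S_k)\in\Fin^k_\gr}\Hom_\Cc\big(\underline A^{\otimes S_1},\underline B\big),
\]
where the $k=0$ term is read as $\Hom_\Cc(\underline A,\underline B)$ (matching $\Om^0_E := E$).

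The first step is the monadic bar resolution. Let $F\colon\Cc\to\CAlg\Cc$ be the free commutative algebra functor, $U$ its right adjoint, and $M = UF$ the resulting monad; since $\Cc^\otimes$ is presentable with $\otimes$ preserving colimits in each variable, $M(E) = \coprod_{n\ge 0}(E^{\otimes n})_{h\Si_n}$. As $U$ is conservative and preserves geometric realizations, the Barr--Beck--Lurie theorem makes $U$ monadic, so the canonical free resolution $\big|\mathrm{Bar}_\bullet(M,M,\underline A)\big| \xto{\sim} A$ is a colimit diagram in $\CAlg\Cc$, with $\mathrm{Bar}_k(M,M,\underline A) = F(M^k\underline A)$. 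Mapping into $B$ and applying the adjunction $F\dashv U$ in each simplicial degree yields
\[
\Hom_{\CAlg\Cc}(A,B) = \lim_{k\in\Delta}\Hom_{\CAlg\Cc}\big(F(M^k\underline A),B\big) = \lim_{k\in\Delta}\Hom_\Cc\big(M^k\underline A,\underline B\big).
\]
So the proposition reduces to a natural-in-$E$ identification $M^k E \simeq \colim_{(S_1\to\cdots\to S_k)\in\Fin^k_\gr}E^{\otimes S_1} = \colim\Om^k_E$ for $k\ge 1$, together with $M^0 E = E$.

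I would establish $M^k E \simeq \colim\Om^k_E$ by induction on $k$. For $k=1$ this is $M E = \coprod_{n\ge 0}(E^{\otimes n})_{h\Si_n} = \colim_{S\in\Fin_\gr}E^{\otimes S}$, using that the groupoid of finite sets is $\coprod_{n\ge 0}B\Si_n$. For the inductive step I invoke the categorified free-algebra identity: for $\Cc^\otimes$ presentable symmetric monoidal, any $\infty$-category $\Dd$, and any functor $f\colon\Dd\to\Cc$, there is a canonical equivalence $M(\colim_\Dd f) \simeq \colim_{M\Dd}(Mf)$, where $M\Dd$ denotes the underlying $\infty$-category of the free symmetric monoidal $\infty$-category on $\Dd$ and $Mf$ the induced functor. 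Applying this to $\Dd = \Fin^k_\gr$ and $f = \Om^k_E$ gives $M^{k+1}E \simeq \colim_{M(\Fin^k_\gr)}M\Om^k_E$. It then remains to check the purely combinatorial equivalence $M(\Fin^k_\gr)\simeq\Fin^{k+1}_\gr$: an object of the left side is a finite set $U$ together with a $U$-indexed family of chains $\big(S_1^{(u)}\to\cdots\to S_k^{(u)}\big)_{u\in U}$, and it corresponds to the chain $\coprod_{u}S_1^{(u)}\to\cdots\to\coprod_{u}S_k^{(u)}\to U$ (the last map recording the index $u$); this is an instance of the Grothendieck-construction equivalence between finite sets over $U$ and $U$-indexed families of finite sets. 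Under it $M\Om^k_E$ becomes $\Om^{k+1}_E$, because $\bigotimes_{u\in U}E^{\otimes S_1^{(u)}} = E^{\otimes\coprod_u S_1^{(u)}}$, which is $\Om^{k+1}_E$ evaluated on the chain above. This closes the induction, and combining with the bar resolution step proves the proposition.

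The main obstacle is the categorified free-algebra identity $M(\colim_\Dd f)\simeq\colim_{M\Dd}Mf$: a naive degreewise manipulation of $M(\colim_\Dd f)$ --- e.g.\ the computation of $M^2 E$ sketched in the introduction --- does not visibly assemble into a functor on the indexing category. I would handle it by letting $\Cc^\otimes$ vary: using the adjunction between symmetric monoidal $\infty$-categories and presentable symmetric monoidal $\infty$-categories (with colimit-preserving functors), together with the fact that $M$ is computed by that free construction, one reduces the assertion to the commutativity of a diagram of presheaf $\infty$-categories built from $\Pp(F\Dd)$, $\Pp(FUF\Dd)$ and $\Pp(F\{*\})$ --- which is in turn controlled by the underlying $1$-categories and can be verified by hand. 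I expect this reduction, rather than the bar resolution or the combinatorics of $\Fin^k_\gr$, to be the technical heart of the proof.
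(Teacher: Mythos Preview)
Your proposal is correct and follows essentially the same approach as the paper: reduce to the abstract monadic resolution theorem, use the bar resolution to reduce to $M^kE\simeq\colim\Om^k_E$, prove the latter by induction via the identity $M(\colim_\Dd f)\simeq\colim_{M\Dd}Mf$ together with $M(\Fin^k_\gr)\simeq\Fin^{k+1}_\gr$ (Grothendieck construction) and $M\Om^k_E\simeq\Om^{k+1}_E$, and establish the categorified free-algebra identity by varying $\Cc^\otimes$ and reducing to the commutativity of a diagram of presheaf categories on $F\Dd$, $FUF\Dd$, $F\{*\}$. You have correctly identified that this last commutativity is the technical heart; the paper devotes the bulk of the section to it (via a lemma characterizing symmetric monoidal colimit-preserving functors out of $\Pp(F\{*\})$ by the image of a single presheaf $\delta_1$, then chasing $\delta_1$ around the diagram).
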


Proposition \ref{monres} may be taken to mean simply that there exists a diagram with the given shape and the given vertices which computes the Hom space on the left. The precise construction of the diagram will be given in the proof. In any \emph{symmetric monoidal presentable $\infty$-category}, we assume the tensor product commutes with small colimits separately in each variable. 

\begin{sthm}[Monadic resolution] 
\label{monresabs}
More generally, Let $\Cc^\otimes$ be a
  symmetric monoidal presentable $\infty$-category. Let $\CAlg \Cc$ denote the category of commutative algebras in $\Cc$. Let $A$, $B$ be algebra objects with underlying objects $\underline A$, $\underline B$. Then
\[
\Hom_{\CAlg \Cc}(A,B) =
 \lim_{k \in \Delta} \lim_{(S_1 \to \cdots \to S_k)\in \Fin^k_\gr}
 \Hom_{\Cc}(\underline A^{\otimes S_1}, \underline B).
\]
\end{sthm}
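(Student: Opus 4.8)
The plan is to compute $\Hom_{\CAlg \Cc}(A,B)$ by resolving $A$ via the comonad on $\CAlg \Cc$ coming from the free--forgetful adjunction, and then to identify the resulting cosimplicial space level by level. Write $F \dashv U$ for the free commutative algebra / forgetful adjunction between $\Cc$ and $\CAlg \Cc$, let $M = UF : \Cc \to \Cc$ be the induced monad, and $W = FU : \CAlg \Cc \to \CAlg \Cc$ the induced comonad. By comonadic descent (the forgetful functor $U$ is conservative and preserves the relevant limits, so the Beck--Chevalley/monadicity hypotheses of \cite[\S 4]{LurieHA} apply), the canonical augmented simplicial object $W^{\bullet + 1} A \to A$ is a colimit diagram in $\CAlg \Cc$, i.e. $\colim_{k \in \Delta^{op}}$ --- here displayed as $\colim_{k \in \Delta}$ with the appropriate variance --- $W^{k+1}A \xrightarrow{\sim} A$. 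Mapping into $B$ turns this colimit into a limit of spaces, and adjunction collapses each term: $\Hom_{\CAlg\Cc}(W^{k+1}A, B) = \Hom_{\CAlg\Cc}(F M^k \underline A, B) = \Hom_\Cc(M^k \underline A, \underline B)$. Hence $\Hom_{\CAlg \Cc}(A,B) = \lim_{k \in \Delta} \Hom_\Cc(M^k \underline A, \underline B)$, functorially in $[k] \in \Delta$.

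The remaining — and genuinely substantive — step is to produce, functorially in the cosimplicial variable, an identification
\[
M^k E = \colim_{(S_1 \to \cdots \to S_k) \in \Fin^k_\gr} E^{\otimes S_1}
\]
for $E = \underline A$ and $k \ge 1$ (with the $k=0$ term being $E$ itself, matching $\Fin^0_\gr = \{*\}$). For $k = 1$ this is the standard computation $ME = \coprod_n E^{\otimes n}/\Sigma_n = \colim_{S \in \Fin_\gr} E^{\otimes S}$, using that the core groupoid of finite sets of cardinality $n$ is $B\Sigma_n$ and that $FE$ is the free $E_\infty$-algebra. For the inductive step I would invoke the abstract colimit-interchange statement flagged in the introduction's ``Monadic resolution'' discussion: for a presentable symmetric monoidal $\Cc^\otimes$, an $\infty$-category $\Dd$, and $f : \Dd \to \Cc$, the free symmetric monoidal $\infty$-category $M\Dd := UF\Dd$ receives an induced $Mf : M\Dd \to \Cc$ and one has $M(\colim_\Dd f) = \colim_{M\Dd} Mf$ in $\Cc$. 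Applying this with $\Dd = \Fin^k_\gr$ and $f = \Omega^k_E$ (the functor sending $S_1 \to \cdots \to S_k$ to $E^{\otimes S_1}$) gives
\[
M^{k+1}E = M\big(\colim_{\Fin^k_\gr} \Omega^k_E\big) = \colim_{M(\Fin^k_\gr)} M\Omega^k_E = \colim_{\Fin^{k+1}_\gr} \Omega^{k+1}_E,
\]
where the last equality is the identification $M(\Fin^k_\gr) \simeq \Fin^{k+1}_\gr$ of underlying $\infty$-categories (a finite set of objects of $\Fin^k_\gr$, i.e. a family of $k$-chains indexed by a finite set $S_0$, is the same as a $(k+1)$-chain $S_0 \to S_1 \to \cdots$ once one also remembers the total map $S_0 \to \coprod S_1$; this is exactly the bookkeeping that makes $\Fin_\gr$ the free-symmetric-monoidal-on-a-point category) together with the matching identification of $M\Omega^k_E$ with $\Omega^{k+1}_E$ under it.

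The main obstacle is precisely the functoriality in $k$: the level-wise identifications $M^k E = \colim_{\Fin^k_\gr}\Omega^k_E$ must be assembled into a map of cosimplicial spaces so that $\lim_k$ of one side equals $\lim_k$ of the other. This is why the naive inductive argument (take the $k=2$ computation, which works object-by-object in the outer variable, and try to make it functorial) fails — as the introduction notes, one ``cannot construct such a functor directly'' — and one is forced through the abstract route via $M\Dd$ and the presheaf-category diagram
\[
\xymatrix{
\Pp(F \Dd) \\
\Pp(FUF\Dd) \ar[u] & \Pp(F\{*\}) \ar[l] \ar[ul]
}
\]
whose commutativity, being controlled by $1$-categories, is what ultimately supplies the coherence. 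Concretely I would: (i) set up $\Omega^\bullet_E$ as an honest functor from (the appropriate twisted arrow / subdivision-type category over) $\Delta$, valued in $\Cc$-diagrams; (ii) prove the abstract interchange $M \circ \colim = \colim \circ M$ at the level of free presentable symmetric monoidal categories, reducing to the presheaf diagram above; (iii) check the cosimplicial structure maps on $M^\bullet E$ correspond under this to the evident structure maps on $k \mapsto \Fin^k_\gr$ (insertion of a singleton, composition of adjacent maps, deletion), which is a direct combinatorial verification once (ii) is in place. Finally, specializing $\Cc^\otimes = \DDA(Z,\QQ)$, $A = C^*X$, $B = C^*Y$, and using $\DDMdga(Z,\QQ) = \CAlg(\DDA(Z,\QQ))$, one recovers Proposition \ref{monres}.
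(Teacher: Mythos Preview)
Your proposal is correct and follows essentially the same route as the paper: comonadic resolution of $A$ via $W^{\bullet+1}A$, then the inductive identification $M^k E \simeq \colim_{\Fin^k_\gr} \Omega^k_E$ established through the abstract interchange $M(\colim_\Dd f) \simeq \colim_{M\Dd} Mf$ (reduced to the presheaf diagram you cite) together with the equivalences $M(\Fin^k_\gr) \simeq \Fin^{k+1}_\gr$ and $M\Omega^k_E \simeq \Omega^{k+1}_E$. One small correction to your parenthetical: in the identification $M(\Fin^k_\gr) \simeq \Fin^{k+1}_\gr$ the new finite set is appended at the \emph{end} of the chain (via the last projection $\Pr_{k+1}$, arising from the Grothendieck construction on $T \mapsto (\Fin^k_\gr)^{\times T}$), not at the front --- this matters because $\Omega^{k+1}_E$ still reads off $E^{\otimes S_1}$ from the \emph{first} set.
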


Our goal for this section is to prove theorem \ref{monresabs}.

\segment{a2}{Left Kan extensions}%
We establish notation and recall a few basic facts concerning left Kan extensions. Consider functors $p$ and $F$ between infinity-categories $\Bb, \Cc, \Dd$ as in the diagram below. 
\[
\xymatrix{
\Cc_{p/B}
\ar[r] \ar@/^3ex/[rr]^-{F_{p/B}}
&
\Cc
\ar[r]_-F \ar[d]_-p
&
\Dd
\\
\{*\}
\ar[r]_-B
&
\Bb
\ar[ur]_-{\Lan_pF}
\\
}
\]
Assume $\Dd$ is cocomplete. Then there exists a functor $\Lan_pF$ as in the diagram with the following properties. Let $B$ be an object of $\Bb$ and let $\Cc_{p/B}$ denote the infinity category of objects under $p$ over $B$. (We recall for the reader's convenience that its objects consist of pairs $(C,f)$ where $C$ is an object of $\Cc$ and
\[
f: p(C) \to B
\]
is a morphism in $\Bb$.) Let $F_{p/B}$ denote the induced functor as in the diagram. Then
\[
(\Lan_pF)(B) = \colim F_{p/B}.
\]
Moreover, we have
\[
\colim (\Lan_pF) = \colim F.
\]

\segment{j29b}{}
In the setting of theorem \ref{monresabs}, let $\Cc = U(\Cc^\otimes)$ denote the underlying infinity category of $\Cc^\otimes$ and let
\[
M = M_\Cc  = UF:  \Cc \to  \Cc
\]
denote the monad associated to the free algebra functor and the forgetful functor \cite[\S4.7]{LurieHA}. If $\Dd$ is an infinity category (or an ordinary category), we let $\Dd_\gr$ denote the infinity groupoid (or ordinary groupoid) obtained by removing all noninvertible morphisms. If $E$ is an object of $\Cc$, we let $\Om_E$ denote the functor of tensor powers of $E$
\[
\Fin_\gr \to \Cc;
\]
on vertices it is given by
\[
\Om_E(S) = E^{\otimes S}.
\]
A construction of $\Om_E$ may be extracted from the adjunction between the free algebra functor and the forgetful functor of Lurie \cite[\S3.1]{LurieHA} as follows. The object $E \in \Cc$ corresponds to a functor 
\[
E: \{ * \} \to \Cc.
\]
The adjunction then provides an associated functor
\[
F(\{*\}) \to \Cc^\otimes
\]
from the free symmetric monoidal infinity category generated by the one-point infinity category $\{*\}$. Taking underlying categories on both sides, we obtain the desired functor as the composite:
\[
\Fin_\gr = UF(*) \to U(\Cc^\otimes) = \Cc.
\]
For $n \in \NN$, let $\Om_E^n$ denote the composite
\[
\Fin^n_\gr \xto{\Pr_1} {\Fin}_\gr \xto{\Om_E} \Cc.
\]

\begin{sprop}
\label{Wit}
In the situation and the notation of segment \ref{j29b}, we have an equivalence 
\[
\tag{*}
M^n(E) \sim \colim \Om_E^n.
\]
\end{sprop}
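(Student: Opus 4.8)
The plan is to prove $(*)$ by induction on $n$, the inductive step resting on a general principle: the free commutative algebra monad $M = M_\Cc$ commutes with colimits once one replaces the indexing category by its image under the free symmetric monoidal construction. Precisely, I would isolate the following lemma. Let $\Cc^\otimes$ be a symmetric monoidal presentable $\infty$-category, let $\Dd$ be an $\infty$-category, and let $f\colon\Dd\to\Cc$ be a functor to the underlying $\infty$-category; write $F$ for the free symmetric monoidal $\infty$-category functor, $U$ for its right adjoint, set $M\Dd := UF\Dd$, and let $Mf\colon M\Dd\to\Cc$ be the underlying functor of the symmetric monoidal functor $F\Dd\to\Cc^\otimes$ adjoint to $f$ (so that, by the construction recalled in segment~\ref{j29b}, $M(\{*\})\simeq\Fin_\gr$ and $ME\simeq\Om_E$ for $E\colon\{*\}\to\Cc$). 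Then there is a natural equivalence
\[
M\bigl(\colim_\Dd f\bigr)\;\simeq\;\colim_{M\Dd}(Mf)
\]
in $\Cc$. Applied with $\Dd=\{*\}$, $f=E$, this reads $ME\simeq\colim_{\Fin_\gr}\Om_E$, i.e.\ the standard formula $ME\simeq\coprod_{m\ge 0}E^{\otimes m}/\Sigma_m$ for a free commutative algebra \cite[\S3.1]{LurieHA}; this is the case $n=1$ of $(*)$.

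For the inductive step, assume $M^n(E)\simeq\colim\Om^n_E$ with $n\ge 1$. Applying the lemma to $\Dd=\Fin^n_\gr$ and $f=\Om^n_E$ gives
\[
M^{n+1}(E) = M\bigl(M^n(E)\bigr) \simeq M\bigl(\colim_{\Fin^n_\gr}\Om^n_E\bigr) \simeq \colim_{M(\Fin^n_\gr)}\bigl(M\Om^n_E\bigr),
\]
so it remains to identify $M(\Fin^n_\gr)=UF(\Fin^n_\gr)$ with $\Fin^{n+1}_\gr$ compatibly with the functors down to $\Cc$ --- and here everything is $1$-categorical, even combinatorial, since $\Fin^n_\gr$ is a $1$-groupoid. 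An object of $UF(\Fin^n_\gr)$ is a finite family $\bigl(S^{(j)}_1\to\cdots\to S^{(j)}_n\bigr)_{j\in S}$ of objects of $\Fin^n_\gr$ indexed by a finite set $S$ (morphisms being bijections of index sets together with compatible isomorphisms of diagrams); forming the disjoint union over $S$ together with the labelling map to $S$,
\[
\bigl(S^{(j)}_1\to\cdots\to S^{(j)}_n\bigr)_{j\in S}\;\longmapsto\;\Bigl(\textstyle\coprod_{j\in S}S^{(j)}_1\to\cdots\to\coprod_{j\in S}S^{(j)}_n\to S\Bigr)
\]
exhibits an equivalence $UF(\Fin^n_\gr)\liso\Fin^{n+1}_\gr$, with inverse given by passing to the fibres of the last arrow. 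Under this equivalence $M\Om^n_E$, the underlying functor of the symmetric monoidal functor adjoint to $\Om^n_E$, carries the displayed family to $\bigotimes_{j\in S}\Om^n_E\bigl(S^{(j)}_1\to\cdots\to S^{(j)}_n\bigr)=\bigotimes_{j\in S}E^{\otimes S^{(j)}_1}=E^{\otimes(\coprod_{j}S^{(j)}_1)}$, which is precisely $\Om^{n+1}_E=\Om_E\circ\Pr_1$ evaluated on the image object. Hence $\colim_{M(\Fin^n_\gr)}(M\Om^n_E)\simeq\colim_{\Fin^{n+1}_\gr}\Om^{n+1}_E$, which completes the induction.

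The main obstacle is the lemma itself: a direct approach --- computing $M^2$ by distributing the tensor product over the colimit after fixing a finite set, then varying that set --- produces the correct object but resists being assembled into a functor. Instead I would treat $\Cc^\otimes$ as a variable. The functor $f$ is adjoint to a symmetric monoidal functor $F\Dd\to\Cc^\otimes$, which by the adjunction between symmetric monoidal $\infty$-categories and symmetric monoidal presentable $\infty$-categories with colimit-preserving functors (given by presheaves with Day convolution, $\Ee\mapsto\Pp(\Ee)$) corresponds to a colimit-preserving symmetric monoidal functor $\widehat f\colon\Pp(F\Dd)\to\Cc^\otimes$. A colimit-preserving symmetric monoidal functor carries colimits to colimits and free commutative algebras to free commutative algebras, so both $M(\colim_\Dd f)$ and $\colim_{M\Dd}(Mf)$ are obtained by applying $\widehat f$ to canonical objects of $\Pp(F\Dd)$; it therefore suffices to establish the equivalence in the universal case $\Cc^\otimes=\Pp(F\Dd)$. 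There it amounts to the assertion that the free commutative algebra, for the Day convolution product, on the colimit of the representables attached to the objects of $\Dd$ is the colimit of all representables on $M\Dd$ --- which is controlled by the combinatorics of free symmetric monoidal categories and by the fact that Day convolution of representables is representable, or equivalently by the commutativity of the diagram of presheaf $\infty$-categories
\[
\xymatrix{
\Pp(F \Dd) \\
\Pp(FUF\Dd) \ar[u] &
\Pp(F\{*\}) \ar[l] \ar[ul]
}
\]
which, being a $1$-categorical matter, can be checked directly.
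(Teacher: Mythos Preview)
Your proposal is correct and follows essentially the same approach as the paper: induction on $n$ via the key lemma $M(\colim_\Dd f)\simeq\colim_{M\Dd}(Mf)$, which is proved by treating $\Cc^\otimes$ as a variable and reducing to a commutativity statement about presheaf categories on free symmetric monoidal $\infty$-categories, together with the identifications $M(\Fin^n_\gr)\simeq\Fin^{n+1}_\gr$ and $M\Om^n_E\simeq\Om^{n+1}_E$. The paper's execution differs only in minor details---it uses a slightly larger square of presheaf categories (checking commutativity by tracking the generator $\delta_1$), obtains $M(\Fin^n_\gr)\simeq\Fin^{n+1}_\gr$ via the Grothendieck construction rather than your explicit disjoint-union description, and secures the functor-level identification $M\Om^n_E=\Om^{n+1}_E$ by first factoring $\Om^n_E=\Om_E\circ\Pr_1$ so that only the $1$-categorical statement $M(\Pr_1)=\Pr_1$ needs direct verification.
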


Proposition \ref{Wit} constitutes the main technical step in our proof of theorem \ref{monresabs}; its proof spans segments \ref{j28a} -- \ref{24May2019f}. 

\segment{j28a}{}
If $\Aa^\otimes \to \Finst$ is a small symmetric monoidal infinity category, we denote by
 \[\tag{*}
\Pp(\Aa^\otimes) = \Pp(\Aa)^\otimes \to \Finst
 \]
 the symmetric monoidal presentable category of presheaves with values in the infinity category of topological spaces \cite[Corollary 4.8.1.12]{LurieTopos}.
  We recall that the monoidal structure is given by Day convolution.
  The underlying category is given by
 \[
 U\Pp(\Aa)^\otimes = \Pp(U\Aa^\otimes)
 \]
 where
 \[
 U\Aa^\otimes = \Aa^\otimes_{\langle 1 \rangle}
 \]
 denotes the underlying category of $\Aa^\otimes$.
 
If $\Cc^\otimes \to \Fin_*$ is a symmetric monoidal infinity category with underlying category
\[
\Cc = U\Cc^\otimes,
\]
we denote by
\[
\xymatrix{
\Cc 
\ar@/^1ex/[r]^-{F_\Cc}
&
**[r] \CAlg \Cc
\ar@/^1ex/[l]^-{U_\Cc}
}
\]
the free algebra and forgetful functors. When there is no danger of confusion, we drop the subscripts. In particular, if $\Dd$ is an infinity category, then $F\Dd \to \Fin_*$ denotes the free symmetric monoidal infinity category generated by $\Dd$. We let
\[
\Fin^{k \coprod}_{\gr} \to \Fin_*
\]
denote the cocartesian symmetric monoidal infinity category associated to $\Fin^k_\gr$.

\segment{v3-1}{}
If $\Dd^\otimes$ is an essentially small symmetric monoidal $\infty$-category and $\Cc^\otimes$ is a presentable symmetric monoidal $\infty$-category, then the Yonneda embedding induces an equivalence of $\infty$-categories
\[
\tag{*}
y:
\Fun^{\otimes, L}
\big( \Pp(\Dd)^\otimes, \Cc^\otimes \big)
\xto{\sim}
\Fun^{\otimes}
\big( \Dd^\otimes, \Cc^\otimes \big)
\]
which is functorial in $\Cc^\otimes, \Dd^\otimes$. The analogous statement that there's a similar equivalence
\[
y_\m{lax}: \tag{$*_\m{lax}$}
\Fun^{\m{Lax}, L}
\big( \Pp(\Dd)^\otimes, \Cc^\otimes \big)
\xto{\sim}
\Fun^{\m{Lax}}
\big( \Dd^\otimes, \Cc^\otimes \big)
\]
 with symmetric monoidal functors replaced by lax symmetric monoidal functors follows from Proposition 4.8.1.10 of \cite{LurieHA}.  The functor categories of (*) are full subcategories of the respective categories of lax symmetric monoidal functors. The functor of $y_\m{lax}$ is given by composition with the symmetric monoidal Yonneda embedding, hence carries symmetric monoidal functors to symmetric monoidal functors. This implies the existence and full faithfulness of the functor $y$. 
 
It remains to show that if a lax symmetric monoidal functor which commutes with small colimits
\[
F: \Pp(\Dd)^\otimes \to \Cc^\otimes
\]
becomes symmetric monoidal upon composition with the symmetric monoidal Yonneda embedding 
\[
\Yy^\otimes: \Dd^\otimes \to \Pp(\Dd)^\otimes,
\]
then $F$ must have been symmetric monoidal to begin with. Recall that it suffices to check that the morphisms
\[
\one \to F(\one)
\quad
\mbox{ and }
\quad
F(\zeta) \otimes F(\eta) \xto{\ka} F(\zeta \otimes \eta)
\]
are equivalences. Since the unit presheaf is represented by the unit object of $\Dd$, the compatibility with unit is immediate. For the compatibility with $\otimes$, recall that $\Pp(\Dd)$ is generated under small colimits by representable presheaves. For immediate use only, we define $\eta \in \Pp(\Dd)$ to be \emph{good} if $\ka$ is an equivalence whenever $\ze$ is representable. The class of good presheaves includes all representable presheaves. Since both $F$ and $\otimes$ preserve small colimits, the class of good presheaves is closed under small colimits. Consequently, every presheaf is good. Now, fixing a presheaf $\eta$, define a presheaf $\zeta \in \Pp(\Dd)$ to be $\eta$-\emph{good} if $\ka$ is an equivalence. We've seen that the class of $\eta$-good presheaves includes all representable presheaves. Again, the class of $\eta$-good presheaves is closed under small colimits. Hence all presheaves are $\eta$-good. This completes the verification and establishes the equivalence (*).

\begin{slm}
\label{11aug-1}
Recall that $F\{*\} \to \Finst$ denotes the free symmetric monoidal category generated by the one-point category, and that
\[
M\{*\} = UF\{*\} = \Fin_\gr
\]
is its underlying category. Let $\delta_1 \in \Pp(M\{*\})$ denote the presheaf which sends singletons to the one-point space and all other objects to the empty space. If $\Cc^\otimes \to \Finst$ is a symmetric monoidal presentable $\infty$-category, then symmetric monoidal functors which preserve small colimits
\[
\xymatrix{
\Pp(F\{*\})^\otimes \ar[rr]^f \ar[dr] && \Cc^\otimes \ar[dl]
\\
& \Finst
}
\]
are uniquely determined up to equivalence by the image of $\delta_1$ in $\Cc$. 
\end{slm}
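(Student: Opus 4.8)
The plan is to obtain the classification as a composite of two equivalences already at our disposal --- the equivalence $y$ of \S\ref{v3-1}, which relates colimit-preserving symmetric monoidal functors out of a presheaf $\infty$-category to symmetric monoidal functors out of the small category it is presheaves on, and the universal property of the free symmetric monoidal $\infty$-category recalled in \S\ref{j28a} --- and then to trace a distinguished object through the composite and identify it with $\delta_1$.

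Concretely, I would first apply $y$ of \S\ref{v3-1} with $\Dd^\otimes = F\{*\}$, which is permitted since $F\{*\}$ is essentially small (its underlying category $\Fin_\gr$ is). This yields an equivalence
\[
\Fun^{\otimes, L}\big( \Pp(F\{*\})^\otimes, \Cc^\otimes \big)
\xto{\sim}
\Fun^{\otimes}\big( F\{*\}, \Cc^\otimes \big),
\]
given by precomposition with the symmetric monoidal Yonneda embedding $\Yy^\otimes\colon F\{*\}\to \Pp(F\{*\})^\otimes$; here the superscript $L$ is exactly the hypothesis of preserving small colimits. Next, the adjunction between free symmetric monoidal $\infty$-categories and underlying $\infty$-categories gives an equivalence
\[
\Fun^{\otimes}\big( F\{*\}, \Cc^\otimes \big)
\xto{\sim}
\Fun\big( \{*\}, \Cc \big) = \Cc,
\]
implemented by restriction along the unit $\iota\colon \{*\}\to M\{*\} = \Fin_\gr$ of that adjunction, which names the generating object. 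Composing, a colimit-preserving symmetric monoidal functor $f\colon \Pp(F\{*\})^\otimes\to\Cc^\otimes$ is carried to the single object $f\big(\Yy^\otimes(\iota(*))\big)\in\Cc$; this already shows that such $f$ are determined up to equivalence by one object of $\Cc$, and it only remains to name that object.

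To see that $\Yy^\otimes(\iota(*)) \simeq \delta_1$, I would pass to underlying $\infty$-categories. By the construction of Day convolution recalled in \S\ref{j28a}, the underlying functor of $\Yy^\otimes$ is the ordinary Yonneda embedding $\Fin_\gr = M\{*\} \to \Pp(M\{*\})$, and $\iota(*)$ is the generating object of $F\{*\}$, i.e.\ a one-element set, consistently with the description of $\Om_E$ in \S\ref{j29b}, where the singleton is sent to $E$. Its image under Yonneda is the presheaf $\Hom_{\Fin_\gr}(-,\{*\})$, which assigns a one-point space to singletons and the empty space to every other object of $\Fin_\gr$ --- that is, precisely $\delta_1$. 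Hence the composite equivalence above is $f\mapsto f(\delta_1)$, which is the assertion of the lemma, indeed in the strengthened form that $f\mapsto f(\delta_1)$ is an equivalence $\Fun^{\otimes, L}\big(\Pp(F\{*\})^\otimes, \Cc^\otimes\big)\xto{\sim}\Cc$. The step I expect to require the most care is this last identification: one must verify that forming underlying categories really does send the symmetric monoidal Yonneda embedding to the ordinary Yonneda embedding --- so that the distinguished presheaf is genuinely $\delta_1$ and not some reindexing of it --- and that the generating object of $F\{*\}$ goes to the singleton rather than to the unit object. Both facts are built into the constructions of Day convolution and of the free symmetric monoidal $\infty$-category cited above, so once they are pinned down the rest is a formal splicing of the two equivalences.
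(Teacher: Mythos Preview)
Your proof is correct and follows essentially the same route as the paper: both compose the equivalence $y$ of \S\ref{v3-1} with the free--forgetful adjunction for symmetric monoidal $\infty$-categories to reduce to a single object of $\Cc$, then identify that object with $f(\delta_1)$. Your write-up is more explicit than the paper's on the identification $\Yy(\{*\}) \simeq \delta_1$ via the representable-functor computation $\Hom_{\Fin_\gr}(-,\{*\})$, which the paper simply asserts as $f(\delta_1) = f'(*)$; your care in distinguishing the generating singleton from the monoidal unit (the empty set) is also well placed.
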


\begin{proof}
 By the equivalences of categories of functors of \ref{v3-1}(*), a monoidal functor $f$ which preserves small colimits as in the lemma is uniquely determined by an associated monoidal functor 
\[
f': F\{*\} \to \Cc^\otimes.
\]
Moreover, 
\[
f(\delta_1) = f'(*).
\]
In turn, $f'$ is uniquely determined by the associated functor
\[
f'':\{*\} \to U\Cc
\]
and 
\[
f'(*) = f''(*).
\qedhere
\]
\end{proof}

\segment{1jula}{}

Let $\Fin^{\coprod} \to \Fin_*$ denote the cocartesian symmetric monoidal structure on the category $\Fin$ of finite sets. Recall that $U\Fin^{\coprod} = \Fin$. Hence, the obvious functor
\[
\{*\} \to \Fin
\]
gives rise by adjunction to a monoidal functor 
\[
\xymatrix{
F\{*\}
\ar[dr] \ar[rr]^i
&&
\Fin^{\coprod}
\ar[dl]
\\
&
\Fin_*.
}
\]
We have $M\{*\} = \Fin_\gr$ (the groupoid of finite sets) and $Mi$ is the inclusion of ordinary categories
\[
\Fin_\gr \subset \Fin.
\]
We thus have a symmetric monoidal functor which preserves small colimits
\[
i_!: \Pp(F\{*\}) \to \Pp(\Fin^{\coprod})
\]
and by adjunction a lax-monoidal functor
\[
i^*: \Pp(F\{*\}) \from \Pp(\Fin^{\coprod}).
\]
A simple cofinality argument shows that $i^*$ is in fact symmetric monoidal.\footnote{We alert the reader to the fact that in topos theory, our $i^*$ is usually denoted $i_*$.}

\segment{9jula}{}
Fix an essentially small infinity category $\Dd$. We will construct a diagram of symmetric monoidal presentable categories and symmetric monoidal functors which preserve small colimits as follows.  
\[
\tag{*}
\xymatrix{
\Pp \big(F(\Dd) \big)
&
\Pp(F\{*\})
\ar[l]_-{p^*}
&
\Pp(\Fin^{\coprod})
\ar[l]_-{i^*}
\\
\Pp \big( FUF(\Dd) \big)
\ar[u]^-{(\ep_{F\Dd})_! }
&
&
\Pp(F\{*\})
\ar[ll]^-{q^*} \ar[u]_-{i_!}
}
\]
The functors $i^*$, $i_!$ are those defined in segment \ref{1jula}. The functor $p^*$ is the pullback of sheaves along $F(p)$ where $p$ is the unique functor 
\[
\Dd \to \{*\}.
\]
Let
$
\ep:FU\to F
$
denote the counit of the adjunction. Then
\[
\ep_{F(\Dd)}: FUF(\Dd) \to F(\Dd)
\]
induces a functor $(\ep_{F\Dd})_!$ as shown. Finally, $q^*$ is the pullback of sheaves along $F(q)$ where $q$ denotes the unique functor
\[
UF(\Dd) \to \{*\}.
\]
(As with $i^*$, the functors $p^*$ and $q^*$ are a priori only lax-monoidal, but are in fact monoidal by a cofinality argument which we omit.)

\begin{ssprop}
\label{7aug0}
Diagram \ref{j28a}(*) commutes. 
\end{ssprop}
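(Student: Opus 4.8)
The plan is to reduce the asserted commutativity to the universal property recorded in Lemma \ref{11aug-1}. Traversing the square of \ref{9jula} in its two ways produces two functors $\Pp(F\{*\}) \to \Pp(F\Dd)$, namely $p^{*}\circ i^{*}\circ i_{!}$ and $(\ep_{F\Dd})_{!}\circ q^{*}$. Each of the five functors in sight is symmetric monoidal and preserves small colimits: the functors $i_{!}$ and $(\ep_{F\Dd})_{!}$ are the images under $\Pp(-)$ of symmetric monoidal functors of small symmetric monoidal $\infty$-categories, hence are colimit-preserving and symmetric monoidal for Day convolution; while $i^{*}$, $p^{*}$, $q^{*}$ are restriction functors along the corresponding functors of underlying $\infty$-categories, hence preserve all small colimits (restriction between presheaf categories has both adjoints) and are symmetric monoidal by the cofinality arguments of \ref{1jula} and \ref{9jula}. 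So both composites are symmetric monoidal colimit-preserving functors out of $\Pp(F\{*\})^{\otimes}$, and by Lemma \ref{11aug-1} it suffices to exhibit an equivalence between their values on $\delta_{1}\in\Pp(F\{*\})$.

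First I would observe that $\delta_{1}$ is the presheaf on $M\{*\}=\Fin_{\gr}$ represented by the one-element set $\{*\}$, since $\Hom_{\Fin_{\gr}}(T,\{*\})$ is a point when $T$ is a singleton and empty otherwise. Since left Kan extension carries the presheaf represented by $S$ to the presheaf represented by the image of $S$, we get $i_{!}\delta_{1}=y_{\Fin}(\{*\})$, which, $\{*\}$ being terminal in $\Fin$, is the terminal presheaf $t_{\Fin}$. Restriction preserves terminal presheaves, so applying $i^{*}$ (restriction along $\Fin_{\gr}\inj\Fin$) and then $p^{*}$ (restriction along $M(p)\colon M\Dd\to M\{*\}$) gives
\[
p^{*}\,i^{*}\,i_{!}\,\delta_{1}=t_{M\Dd},
\]
the terminal presheaf on $M\Dd=UF\Dd$.

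For the other route I would use that $U\ep_{F\Dd}$ is, by construction of the monad $M=UF$, its multiplication $\mu_{\Dd}\colon M(M\Dd)\to M\Dd$ (concretely: concatenation of words), and that the functor of underlying categories defining $q^{*}$ is $M(q)\colon M(M\Dd)\to M\{*\}=\Fin_{\gr}$, sending a word to its underlying index set. A direct inspection then identifies $q^{*}\delta_{1}=M(q)^{*}\delta_{1}$ with the presheaf supported, as a point, on the length-one part of $M(M\Dd)$ — that is, $q^{*}\delta_{1}=(\eta_{M\Dd})_{!}\,t_{M\Dd}$, where $\eta_{M\Dd}\colon M\Dd\inj M(M\Dd)$ is the monad unit, the inclusion of length-one words. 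Since $(\ep_{F\Dd})_{!}=(\mu_{\Dd})_{!}$ and left Kan extensions compose,
\[
(\ep_{F\Dd})_{!}\,q^{*}\,\delta_{1}=(\mu_{\Dd}\circ\eta_{M\Dd})_{!}\,t_{M\Dd}=(\Id_{M\Dd})_{!}\,t_{M\Dd}=t_{M\Dd},
\]
by the monad identity $\mu_{\Dd}\circ\eta_{M\Dd}=\Id_{M\Dd}$. Both composites therefore carry $\delta_{1}$ to the terminal presheaf on $M\Dd$, and Lemma \ref{11aug-1} promotes this coincidence to an equivalence of the two composite functors, which is what is meant by commutativity of the square.

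I expect the bottom route to be the only place demanding care: one must correctly recognize $U\ep_{F\Dd}$ as the monad multiplication $\mu_{\Dd}$ and verify that $q^{*}\delta_{1}$ is concentrated precisely on the image of the unit $\eta_{M\Dd}$, so that the ensuing left Kan extension along $\mu_{\Dd}$ collapses — via the triangle identity $\mu\circ\eta M=\Id$ — to the identity. Everything else (representables going to representables under $i_{!}$, restrictions preserving terminal objects, and Lemma \ref{11aug-1} itself) is formal, and the lax-to-strong monoidality of $i^{*}$, $p^{*}$, $q^{*}$ is taken as granted from \ref{1jula} and \ref{9jula}.
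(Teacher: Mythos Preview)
Your proof is correct and follows the paper's overall outline: both reduce via Lemma \ref{11aug-1} to checking that the two composites send $\delta_1$ to the terminal presheaf on $M\Dd$. The execution differs in flavor. For the top route, the paper (Lemma \ref{7augb}) evaluates the Kan extension $(i_!\delta_1)(S)$ pointwise as a colimit and checks contractibility for each finite $S$; you instead observe that $\delta_1 = y_{\Fin_\gr}(\{*\})$ is representable, so $i_!\delta_1 = y_{\Fin}(\{*\})$ is terminal because $\{*\}$ is terminal in $\Fin$ --- slicker, same content. For the bottom route, the paper (segment \ref{11auga} and Lemma \ref{11auge}) identifies $q^*\delta_1$ with the characteristic presheaf $\delta_{M\Dd}$ of the length-$1$ part and then evaluates $(\ep_{F\Dd})_!\delta_{M\Dd}$ pointwise, using that the restriction of $U\ep_{F\Dd}$ to the length-$1$ stratum is the identity so that the relevant overcategory has a terminal object. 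You repackage this as $q^*\delta_1 \simeq (\eta_{M\Dd})_! t_{M\Dd}$ and then invoke functoriality $(\mu_\Dd)_!(\eta_{M\Dd})_! = (\mu_\Dd\circ\eta_{M\Dd})_! = \Id_!$ via the monad triangle law. Your argument is more structural (representables under $(-)_!$, composition of Kan extensions, triangle identity) where the paper's is more computational, but the core content coincides: the paper's key observation that ``$U\ep_\Ee$ restricted to the length-$1$ part is the identity'' \emph{is} the triangle identity $U\ep\circ\eta U = \Id$, and the contractibility of the overcategory in Lemma \ref{11auge} is exactly what makes your identification $q^*\delta_1 \simeq (\eta_{M\Dd})_! t_{M\Dd}$ go through (both rely on $M^2\Dd$ splitting as a coproduct by outer word-length, so there are no morphisms into the length-$1$ stratum from outside).
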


The proof spans segments \ref{7auga}--\ref{11augf}.

\ssegment{7auga}{}
By lemma \ref{11aug-1}, it will suffice to compute the image of the presheaf $\delta_1$ regarded as an object of (the fiber above $\langle 1 \rangle$) of the symmetric monoidal category $\Pp(F\{*\})$. In doing so, we may restrict the diagram to its fiber above $\langle 1 \rangle$. 

We begin by computing $i_! \delta_1$.

\begin{sslm}
\label{7augb}
Let $S$ be a finite set. Then $(i_!\delta_1)(S)$ is contractible. 
\end{sslm}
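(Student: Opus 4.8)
The plan is to recognize $\delta_1$ as a representable presheaf and then push it through $i_!$, using that $i_!$ takes representables to representables. First I would observe that, since $M\{*\}=\Fin_\gr$ is the \emph{groupoid} of finite sets, $\delta_1$ is precisely the presheaf represented by a one-point set $\mathrm{pt}$: for any finite set $T$ the space $\Hom_{\Fin_\gr}(T,\mathrm{pt})$ is a point when $|T|=1$ and is empty otherwise, so $\delta_1\simeq\Yy_{\Fin_\gr}(\mathrm{pt})$.

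Next, by the reduction of segment \ref{7auga} it suffices to compute the image of $\delta_1$ under the restriction of $i_!$ to the fibers over $\langle 1\rangle\in\Finst$, i.e.\ under the induced functor on underlying presheaf categories $\Pp(\Fin_\gr)\to\Pp(\Fin)$. By the construction of segment \ref{1jula} this functor is left Kan extension along the inclusion $Mi\colon\Fin_\gr\hookrightarrow\Fin$, equivalently the left adjoint of the restriction functor $\Pp(\Fin)\to\Pp(\Fin_\gr)$; such a left adjoint sends a representable $\Yy_{\Fin_\gr}(c)$ to $\Yy_\Fin(Mi\,c)$, as is immediate from the Yoneda lemma on the two sides. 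Since $Mi$ carries a one-point set to a one-point set, this gives $i_!\delta_1\simeq\Yy_\Fin(\mathrm{pt})$, whence, evaluating at a finite set $S$, $(i_!\delta_1)(S)\simeq\Hom_\Fin(S,\mathrm{pt})$, which is contractible because $\mathrm{pt}$ is terminal in $\Fin$.

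I do not expect a genuine obstacle; the argument is essentially bookkeeping, and the two points that want care are already in place in the excerpt: the identification $U\Pp(\Fin^{\coprod})^\otimes=\Pp(\Fin)$ from segment \ref{j28a}, so that evaluating $i_!\delta_1$ at an object $S\in\Fin$ is meaningful, and the identification of the underlying functor of $i_!$ with $\Lan_{Mi}$ from segment \ref{1jula}. Should one wish to avoid representability, one could instead compute $(i_!\delta_1)(S)$ as the colimit of $\delta_1$ along the comma category of finite sets \emph{under} $S$ relative to $Mi$; the full subcategory on which $\delta_1$ is nonempty is then the (contractible) groupoid of one-point sets equipped with a map from $S$, once again giving a point --- the only subtlety there being the variance of the comma category.
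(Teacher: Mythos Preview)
Your argument is correct and in fact cleaner than the paper's. The paper computes $(i_!\delta_1)(S)$ directly from the pointwise left Kan extension formula: it identifies $j_!\Ff=\Lan_{j\op}\Ff$ for $j=Ui$, writes the value at $S$ as a colimit over the comma category $(\Fin_{S/})\op_\gr$, observes that $\delta_1$ is empty except on singletons, and concludes that the colimit reduces to a colimit of a one-point diagram with value a point. Your approach bypasses this computation by recognizing $\delta_1$ as the representable $\Yy_{\Fin_\gr}(\mathrm{pt})$ and invoking the standard fact that $\Lan_{Mi}$ sends representables to representables; the result $(i_!\delta_1)(S)\simeq\Hom_\Fin(S,\mathrm{pt})$ then falls out immediately. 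What your route buys is a one-line proof that makes the answer transparent (namely, $i_!\delta_1$ is the terminal presheaf because $\mathrm{pt}$ is terminal in $\Fin$); what the paper's route buys is that it is self-contained and does not appeal to the representables-to-representables fact, and it also serves as a warm-up for the structurally similar but less representable computation in Lemma~\ref{11auge}. Your closing alternative sketch is essentially the paper's argument.
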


\begin{proof}
Let $Ui$ denote the restriction of the functor $i$ to the fibers above $\langle 1 \rangle$, and similarly for any functor between symmetric monoidal categories. There is an equivalence $(Ui)^* = U(i^*)$. Moreover, $U$ preserves adjunctions. Hence we have an equivalence $(Ui)_! = U(i_!)$. Let's denote $U(i)$ by $j$. Then we have for any presheaf $\Ff$ as in the following diagram, $j_! \Ff = \Lan_{j\op} \Ff$:
\[
\xymatrix{
\Fin\op_\gr
 \ar[r]^{\Ff} \ar @{}|\bigcap _{j\op} [d]
&
\Ss 
\\
\Fin\op
\ar @{.>}[ur]_{j_!\Ff = \Lan_{j\op}\Ff.}
}
\]
Then $(\Fun\op_\gr)_{j\op/S} = (\Fin_{S/})\op_\gr$ (an equality of 1-categories) and for any object $S \to T$ of $(\Fin_{S/})\op_\gr$, we have
\[
(\de_1)_{j\op/S}
(S \to T) = \de_1(T).
\]
In computing the colimit, we may ignore empty topological spaces, and we are left with a colimit over $\{*\}$ taking the value $\{*\}$. Hence 
\[
(i_!\delta_1)(S) 
= \colim \big( (\de_1)_{j\op/S} \big)
= \{*\}
\]
as hoped.
\end{proof}

\ssegment{}{}
Let $\underline *$ denote the terminal presheaf. Then by lemma \ref{7augb}, the unique morphism
\[
i_!\de_1 \to \underline *
\]
is an equivalence. Pulling back along $i$ and $p$ (restricted to the fiber above $\langle 1 \rangle$), we conclude that
\[
p^*i^*i_!(\de_1) \cong \underline *
\]
is equivalent to the terminal presheaf on $M(\Dd)$.

\ssegment{11auga}{}
Turning our attention to $q^*$, we find that $q^*\de_1 = \de_{M(\Dd)}$ is the presheaf on $M^2\Dd$ which sends objects of $M(\Dd)$ to the singleton space and all other objects to the empty space. To see this, let us denote $M(\Dd)$ by $\Ee$ and note that the restriction of the map $q^*$ defined in segment \ref{9jula} to the fiber above $\langle 1 \rangle$ is the pullback functor along $M(q)$. The latter breaks up as a sequence of functors
\[
M_n : \Ee^{\times n} / \Si_n \to B\Si_n
\]
hence $q^*\de_1 = 
\de_{\Ee^{\times 1}/\Si_1} = \de_\Ee$ as claimed.

\begin{sslm}
\label{11auge}
Let $\Ee \to \Fin_*$ be a symmetric monoidal $\infty$-category, let $\ep_\Ee$ denote the functor
\[
FU\Ee \to \Ee
\]
induced by the counit of the adjunction, let $\ep_{\Ee!}$ denote the covariant operation on presheavs of spaces restricted to a functor 
\[
\Pp(UFU\Ee) \to \Pp(U\Ee),
\]
let
$
\de_{U\Ee} \in \Pp(UFU\Ee) 
$
denote the characteristic presheaf of $U\Ee \subset UFU\Ee$, and let $X$ be an object of $U\Ee$. Then
$
\ep_{\Ee!}\de_{U\Ee}(X)
$
is contractible. 
\end{sslm}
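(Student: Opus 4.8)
The plan is to show that the presheaf $\ep_{\Ee!}\de_{U\Ee}$ is the terminal presheaf $\underline{*}$ on $U\Ee$; evaluating at $X$ then gives a point, which is what the lemma asserts.

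The first ingredient I would recall is the explicit weight grading of the free symmetric monoidal $\infty$-category. For an (essentially small) $\infty$-category $\Aa$, the underlying $\infty$-category of $F\Aa$ is $UF\Aa \simeq \coprod_{n\ge 0}(\Aa^{\times n})_{h\Si_n}$ (homotopy orbits): it has no morphisms between objects of distinct weights, and its weight-$1$ summand is a copy of $\Aa$, included via the unit $\eta_\Aa\colon \Aa\to UF\Aa$ of the free--forgetful adjunction $F\dashv U$ between $\infty$-categories and symmetric monoidal $\infty$-categories. Applying this to $\Aa = U\Ee$, I write $\iota := \eta_{U\Ee}\colon U\Ee\hookrightarrow UFU\Ee$ for the inclusion of the weight-$1$ summand.

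Next I would assemble three essentially formal inputs. (i) Since $\iota$ identifies $U\Ee$ with a union of connected components of $UFU\Ee$, restriction along $\iota$ is a projection off the evident product decomposition of $\Pp(UFU\Ee)$ by weight, so its left adjoint $\iota_!\colon \Pp(U\Ee)\to\Pp(UFU\Ee)$ extends a presheaf by the empty presheaf on all other summands; in particular $\iota_!(\underline{*}) = \de_{U\Ee}$, the characteristic presheaf of the weight-$1$ part. (ii) The operation $\ep_{\Ee!}$ of the lemma is the presheaf left Kan extension $(U\ep_\Ee)_!$ along the underlying functor $U\ep_\Ee\colon UFU\Ee\to U\Ee$ of the counit $\ep_\Ee\colon FU\Ee\to\Ee$; this amounts to the fact that passing to underlying $\infty$-categories turns the Day-convolution pushforward into the ordinary presheaf pushforward. (iii) Presheaf left Kan extension is pseudofunctorial, $(g\circ f)_!\simeq g_!\circ f_!$, and $(\Id)_! \simeq \Id$.

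Finally I would combine these using the triangle identity $U\ep_\Ee\circ\eta_{U\Ee} = \Id_{U\Ee}$:
\[
\ep_{\Ee!}\de_{U\Ee} \;=\; (U\ep_\Ee)_!\,\iota_!\,\underline{*} \;=\; (U\ep_\Ee\circ\iota)_!\,\underline{*} \;=\; (\Id_{U\Ee})_!\,\underline{*} \;=\; \underline{*},
\]
whence $\ep_{\Ee!}\de_{U\Ee}(X)\simeq *$. The only real friction is in pinning down the two structural inputs — the weight-graded model for $UFU\Ee$ together with the identification of its weight-$1$ part with $\eta_{U\Ee}$, and the identification $\ep_{\Ee!} = (U\ep_\Ee)_!$ — both of which are routine within Lurie's framework but deserve a sentence of care. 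As an independent check one can instead unwind the colimit formula of segment \ref{a2}: $\ep_{\Ee!}\de_{U\Ee}(X)$ is the colimit of $\de_{U\Ee}$ over the comma category of pairs $(Z,\ X\to U\ep_\Ee Z)$; since $\de_{U\Ee}$ vanishes on every $Z$ of weight $\ne 1$ and no morphism lands in a weight-$1$ object from a higher-weight one, the colimit may be computed over the full subcategory on weight-$1$ objects, which — using $U\ep_\Ee\circ\iota = \Id$ once more — is the coslice $(U\Ee)_{X/}$; the latter has the initial object $\Id_X$, so the colimit of the constant point-valued diagram over it is contractible.
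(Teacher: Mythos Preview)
Your proof is correct, and your main argument is in fact cleaner than the paper's. The paper proceeds exactly along the lines of your ``independent check'': it unwinds the left Kan extension formula for $\ep_{\Ee!}\de_{U\Ee}(X)$, restricts to the support of $\de_{U\Ee}$ using the weight decomposition of $UFU\Ee$, identifies the resulting indexing category with the slice $U\Ee_{/X}$ (equivalently, the opposite of your coslice $(U\Ee)_{X/}$), and concludes by noting that the geometric realization of a category with a terminal object is contractible.

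Your primary route --- recognizing $\de_{U\Ee}$ as $\iota_!\underline{*}$ for $\iota = \eta_{U\Ee}$ and then invoking the triangle identity $U\ep_\Ee\circ\eta_{U\Ee} = \Id_{U\Ee}$ together with pseudofunctoriality of $(-)_!$ --- bypasses the explicit colimit entirely and yields the stronger conclusion $\ep_{\Ee!}\de_{U\Ee} = \underline{*}$ in one line. This is a genuine simplification: the paper's argument establishes contractibility pointwise, whereas yours identifies the presheaf globally. One small remark on your independent check: the clause ``no morphism lands in a weight-$1$ object from a higher-weight one'' is correct but slightly undersells the situation; since $UFU\Ee$ decomposes as a \emph{disjoint} union by weight, the comma category itself splits as a coproduct over weights, which is what makes the restriction to weight $1$ immediate.
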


\begin{proof}
As in lemma \ref{7augb}, the lower-shriek operation is given by a left Kan extension (we drop the superscripts ``op''):
\[
\xymatrix{
(UFU\Ee)_{U(\ep_\Ee)/X}
\ar[r]
\ar@/^3ex/[rr]^-{(\de_{U\Ee})_{U(\ep_\Ee)/X}}
&
UFU\Ee 
\ar[r]_-{\de_{U\Ee}}
\ar[d]_{U(\ep_\Ee)}
&
\Ss
\\
{*}
\ar[r]_{X}
&
U\Ee
\ar@{.>}[ur]_{\ep_{\Ee!}\de_{U\Ee}}
}
\]
in computing its value
\[
\ep_{\Ee!}\de_{U\Ee}(X) = 
\colim (\de_{U\Ee})_{U(\ep_\Ee)/X}
\]
we may restrict attention to the full subcategory on which the functor $\ep_{\Ee!}\de_{U\Ee}$ takes nonempty values. The decomposition of $UFU\Ee$ as a disjoint union of symmetric powers of $U\Ee$ shows that $U(\ep_\Ee)$ restricted to $\de_{U\Ee}\inv\{*\} = U\Ee$ is simply the identity functor. Consequently, 
\[
(\de_{U\Ee})_{U(\ep_\Ee)/X} \inv\{*\}
\]
is equal to the overcategory $U\Ee_{/X}$. The colimit of the constant functor $\underline *$ is the geometric realization of the source. The geometric realization of a category possessing a terminal object is contractible. 
\end{proof}

\ssegment{11augf}{}
Applying lemma \ref{11auge} to $\Ee = F\Dd$, we find that 
\[
\ep_{F\Dd!}\de_{M(\Dd)} \cong \underline *,
\]
which completes the proof of proposition \ref{7aug0}.

\segment{j29a}{}
We now use the commutativity of diagram \ref{9jula}(*) to prove proposition \ref{Wit}. Taking categories of symmetric monoidal functors which preserve small colimits
\[
\Fun^{L, \otimes}(?, \Cc^\otimes) 
\]
into our symmetric monoidal presentable $\infty$-category $\Cc^\otimes \to \Fin*$, we obtain a diagram like so
\[
\tag{$*_\Cc^\otimes$}
\xymatrix
@C=1ex
{
\Fun\big(\Pp \big(F(\Dd) \big), \Cc^\otimes \big)
&
\Fun\big( \Pp(F\{*\}), \Cc^\otimes \big)
\ar@{<-}[l]
&
\Fun\big( \Pp(\Fin), \Cc^\otimes \big)
\ar@{<-}[l]
\\
\Fun\big( \Pp \big( FUF(\Dd) \big), \Cc^\otimes \big)
\ar@{<-}[u]
&
&
\Fun\big( \Pp(F\{*\}), \Cc^\otimes \big)
\ar@{<-}[ll] \ar@{<-}[u]
}
\]
which commutes up to homotopy. In turn, by segment \ref{v3-1}, diagram ($*^\otimes_\Cc$) gives rise to a diagram of $\infty$-categories of functors like so
\[
\tag{$*_\Cc$}
\xymatrix{
\Fun(\Dd , U\Cc)
\ar[r]^-{\colim} 
\ar@{=}[d]
&
U\Cc
\ar[r]^-F
&
\CAlg \Cc
\ar[dd]^-U
\\
\Fun^{L, \otimes}(F\Dd, \Cc^\otimes)
\ar[d]_-U
\\
\Fun(M\Dd, U\Cc)
\ar[rr]_-\colim
&&
U\Cc
}
\]
which commutes up to homotopy. We have abbreviated $U(\Cc^\otimes)$ by $U\Cc$. 

\segment{j29d}{}
Hence, if $\Dd$ is an essentially small infinity category, $\Cc^\otimes$ is a symmetric monoidal presentable infinity category, and $f:\Dd \to U\Cc$ is a functor to the underlying category
$
U\Cc 
$
of $\Cc^\otimes$, there's an induced functor
\[
Mf: M\Dd \to U\Cc
\]
and
\[
\tag{*}
\colim_{M\Dd} Mf = M(\colim f). 
\]

\segment{j29c}{}
We have $M\{*\} = \Fin_\gr$, the groupoid of finite sets and isomorphisms, and more generally, 
\[
\tag{*}
M  (\Fin^k_\gr)  = \Fin^{k+1}_\gr.
\]
Moreover,
\[
\tag{**}
M(\Om^k_E) = \Om^{k+1}_E.
\]

\ssegment{24May2019a}{}
To establish \ref{j29c}(*), recall that if $X \in \Ss$ is an object of the infinity category $\Ss$ of spaces (i.e. a simplicial set in which every horn is fillable) and $f: X\to \Ss$ is a functor, then the Grothendieck construction on $f$ is equivalent to the colimit of $f$. We regard $\Fin^k_\gr$ as an object of $\Ss$. The Cartesian symmetric monoidal structure 
\[
\Ss = U\Ss^{\prod}
\]
gives rise to a functor 
\[
\Om_{\Fin^k_\gr}: \Fin_\gr \to \Ss
\]
\[
T \mapsto (\Fin^k_\gr)^{\prod T}.
\]
The Grothendieck construction on $\Om_{\Fin^k_\gr}$ is equivalent to the last projection
\[
\Pr_{k+1}: \Fin^{k+1}_\gr \to \Fin_\gr 
\]
\[
(S_1 \to \cdots \to S_{k+1}) \mapsto S_{k+1}.
\]
Hence, 
\begin{align*}
M(\Fin^k_\gr) 
	= \colim \Om_{\Fin^k_\gr} 
	= \Fin_\gr^{k+1}.
\end{align*}

\ssegment{24May19b}{}
To establish \ref{j29c}(**), note first that if 
\[
F^\otimes: \Aa^\otimes \to \Bb^\otimes
\]
is a symmetric monoidal functor between symmetric monoidal $\infty$-categories with underlying functor 
\[
F: \Aa \to \Bb
\]
and if 
\[
\xymatrix{
\Ee \ar[d]_G \ar[dr]^H \\
\Aa \ar[r]_F &
\Bb
}
\]
is a commuting triangle of $\infty$-categories, then the functoriality of the adjunction between the free and forgetful functors gives rise to a commuting triangle 
\[
\xymatrix{
F\Ee \ar[d]_G \ar[dr]^H \\
\Aa^\otimes \ar[r]_{F^\otimes} &
\Bb^\otimes.
}
\]

\ssegment{24May2019c}{}
Disjoint union of sets induces a symmetric monoidal structure on $\Fin_\gr$ which may be obtained as a restriction of the cocartesian structure on the full category of finite sets. The functor $\Om_*$ associated to the one-point set is simply the identity functor on $\Fin_\gr$. Thus, 
\[
\Om^k_* = \Pr_1 : \Fin^k_\gr \to \Fin_\gr
\]
and the triangle
\[
\xymatrix{
\Fin^k_\gr \ar[d]_{\Om_*^k} \ar[dr]^{\Om^k_E} \\
\Fin_\gr \ar[r]_{\Om_E} &
\Cc
}
\]
commutes. The bottom arrow is by construction the underlying functor of a symmetric monoidal functor 
\[
F\{*\} \to \Cc^\otimes.
\]
Taking adjoints as in segment \ref{24May19b} and then applying the functor $U$ of underlying categories and functors, we find that 
\[
M(\Om^k_E) = \Om_E \circ M(\Om^k_*).
\]

\ssegment{}{}
We are thus reduced to establishing a natural isomorphism of functors of 1-categories 
\[
M(\Om^k_*) = \Om^{k+1}_*,
\]
which is comparatively straightforward. We nevertheless work this out. The following observation applies equally (if less precisely) to infinity categories: if
\[
f: \Aa \to \Bb = U\Bb^\otimes
\]
is a functor to the underlying category of a symmetric monoidal category, then
\[
M \Aa = \coprod_n \Aa^{\prod n} / \Si_n
\]
and the functor 
\[
Mf: M\Aa \to \Bb
\]
is induced by the functors 
\[
\Aa^{\prod n} \to \Bb
\]
\[
(x_1, \dots, x_n) \mapsto 
f(x_1) \otimes \cdots \otimes f(x_n).
\]
Applying this description of $Mf$ to $f = \Om^k_*$, we find that the triangle 
\[
\xymatrix{
\coprod_n(\Fin^k_\gr)^{\prod n} / \Si_n  
\ar@{=}[d] \ar[dr]^{M\Om_*^k}\\
\Fin^{k+1}_\gr 
\ar[r]_{\Om_*^{k+1}} &
\Fin_* 
}
\]
\[
\xymatrix{
(S_\bullet^1, \dots, S_\bullet^n)
\ar@{|->}[dr] 
\ar@{|->}[d]\\
(\coprod_i S_1^i \to \cdots \to 
\ar@{|->}[r]
\coprod_i S_k^i \to \{1, \dots, n\}) &
\coprod_{i=1}^n S_1^i
}
\]
commutes indeed.

\segment{24May2019f}{}
Hence, applying equation \ref{j29d}(*) repeatedly, we obtain equation \ref{Wit}(*), which completes the proof of proposition \ref{Wit}.

\segment{monresprf}{Proof of theorem \ref{monresabs}}
We switch to using underline to denote the underlying object of an algebra-object. Let
\[
W: \CAlg \Cc \to \CAlg \Cc
\]
denote the comonad associated to the adjunction between the free algebra functor and the forgetful functor. 
By monadic resolution \cite[4.7.3.14]{LurieHA}, there's an equivalence in $\CAlg  \Cc$
\[
\colim_{k \in \Delta} W^{k+1} A \xto{\sim} A.
\]
Thus,
\begin{align*}
\Hom_{\CAlg \Cc}(A,B) &=
	\lim_{k \in \Delta} \Hom_{\Cc} 
		(M^k \underline A, \underline B)
\\
	&= \lim_{k \in \Delta} \Hom_{\Cc} 
		(\colim \Om_{\underline A}^k, \underline B)
\\
	&= \lim_{k \in \Delta}
		\lim_{(S_1 \to \cdots \to S_k) \in \Fin^k_\gr}
		 \Hom_{\Cc} 
		(\underline A^{\otimes S_1}
			, \underline B),
\end{align*}
which completes the proof of the monadic resolution theorem (\ref{monresabs}).

\section{Main theorem}

Having formed a bridge from our spaces of motivic morphisms to spaces of morphisms in $\DDA$ in our \textit{monadic resolution theorem}, we now form a bridge from the latter to K-theory. This is essentially a matter of assembling several (deep) theorems from the literature on motives.

\segment{}{Overview}
We begin with a brief overview of the ingredients to be assembled. The situation we have in mind is as follows. We consider a separated Noetherian base scheme $Z$ and a smooth scheme $X$ over $Z$. We assume $X$ admits a compactification $\overline X$ which is smooth over $Z$, such that the complement $V$ is a relative snc divisor. If $S$ is a finite set, we wish to write the cohomological motive $C^*(X^S)$ of $X^S$ in $ \DDA(Z,\QQ)$ as a limit of homological motives of smooth schemes. Let $V_S$ denote the complement of $X^S$ in $\overline X^S$. By poincar\'e duality,
\[
C^*(X^S) = C_*^c(X^S).
\]
The symbol $C_*^c$ refers to \textit{homology with compact supports}, and may be defined as a (homotopy) cofiber
\[
\xymatrix{
 C_*^c(X^S)
 	&
 	0 
	\ar[l]
\\
C_*(\Xbar^S)
	\ar[u]
	&
	C_*(V_S).
	\ar[l] \ar[u]
}
\]
In turn, the motive $C_*(V_S)$ may be written as the (homotopy) colimit of the hypercube of motives of intersections of components, as we now recall.

\segment{hyp}{Combinatorics of hypercubes}
We begin with a brief recap of the combinatorics of hypercubes; this is not necessary for what follows. Recall that $\Delta^1$ denotes the category with two objects and one non-identity morphism
\[
0 \xto{\tau} 1.
\]
The notion of \emph{hypercube} we have in mind is most naturally defined as the Cartesian power $(\Delta^1)^S$ of the category $\Delta^1$ by a finite set $S$. For example, if $S = \{1,2\}$ has two elements, then the objects of $(\Delta^1)^S$ are the ordered pairs of elements of $\{0,1\}$, and the category $(\Delta^1)^S$ may be depicted as a square
\[
\xymatrix{
(0,0) \ar[r] \ar[d] & (0,1) \ar[d] 
\\
(1,0) \ar[r] & (1,1).
}
\]
In general, the objects of $(\Delta^1)^S$ are the maps 
\[
f: S \to \{0,1\}
\]
and the morphisms are given by
\[
\Hom(f,f') = \prod_{s \in S}
	\Hom \big( f(s), f'(s) \big).
\]

There is a slightly different construction of the categories $(\Delta^1)^S$ which we now describe. Let $PS$ denote the category whose objects are the subsets of $S$ and whose morphisms are the inclusions. Then there's an isomorphism of categories 
\[
\Psi: PS \xto{\sim}  (\Delta^1)^S
\]
given on objects by
\[
\Psi(S') = 1_{S'}
\]
where 
\[
1_{S'} (s) = 
\left\{
\begin{matrix}
1 & \mbox{if } s \in S' 
\\
0 & \mbox{if } s \notin S'
\end{matrix}
\right.
\]
is the characteristic function of $S'$, and given on morphisms as follows. If $\iota: S'' \subset S'$ is a morphism in $PS$, then $\Psi(\iota)$ is the morphism 
\[
1_{S''} \to 1_{S'}
\]
given by
\[
\Psi(\iota) =
\left\{
\begin{matrix} 
\Id_1 & \mbox{if } s \in S'' 
\\
\tau & \mbox{if } s \in S' \setminus S''
\\
\Id_0 & \mbox{if } s \notin S'.
\end{matrix}
\right.
\]
Under this equivalence, the full subcategory of $(\Delta^1)^S$ given by removing the final vertex $1_S$ corresponds to the category $P'S$ of proper subsets of $S$.

\segment{hyp1}{Hypercube resolution and hypercube compactification}
Recall that $P'S$ denotes the category whose objects are the proper subsets of $S$ and whose morphisms are the inclusions. If $W$ over $Z$ is a union of smooth schemes $W_s$  with simple normal crossings indexed by $s$ in a finite set $S$
\[
W = \bigcup_{s \in S} W_s,
\]
we define an associated punctured hypercube of smooth $Z$-schemes
\[
H: P'S \to \Sm(Z)
\]
by the formula
\[
H(S') = \bigcap_{s\in S'} W_s.
\]
By $h$-descent, 
\[
\tag{*}
C_*(W) = \colim C_*H,
\]
where the (homotopy) colimit is take in the infinity category $\DDA(Z,\QQ)$. Indeed, this may be taken as a definition, justified, for instance, by chapter 5 of Cinsinski-D\'eglise \cite{CisDegEt}. There, the authors construct a model category, as well as a functor 
\[
C_*': \{\mbox{finite type $Z$-schemes} \}
\to \DDA_h(Z,\QQ)
\]
to its homotopy category, with the following properties. On the one hand, there's a Quillen equivalence 
\[
\tag{**}
\DD A(Z,\QQ) \cong \DD A_h(Z,\QQ)
\]
compatible with the homological motives functors $C_*$, $C_*'$. On the other hand, as a direct consequence of the construction, when $S = \{1,2\}$ has two elements, 
\[
C'_*(W) = C'_*(W_1) \coprod_{C'_*(W_{1,2})}^{(ho)} C'_*(W_2).
\]
By repeated application of the Fubini theorem, and by translating the result along the Quillen equivalence (**), we obtain equation (*).

Suppose now that $X$ over $Z$ admits an open immersion in a smooth proper scheme $\Xbar$ so that the complement $W = \bigcup_{s \in S} W_s$ is a simple normal crossings divisor. Then $C_*^c(X)$ is a colimit of colimits
\[
\xymatrix{
 C_*^c(X)
 	&
 	0 
	\ar[l]
\\
C_*(\Xbar)
	\ar[u]
	&
	\colim C_*H.
	\ar[l] \ar[u]
}
\]
We may use the Fubini theorem to assemble the colimits. This leads to a category $\KK S$ obtained from $P'S$ by adding two objects $l$ and $u$, and morphisms
\[
\xymatrix{
	&
	u 
\\
l
	&
	S' \ar[u] \ar[l]
}
\]
for each object $S'$ of $P'S$.

Recall that $M(Z,\QQ)$ denotes the model category of complexes of presheaves of $\QQ$-vector spaces on $\Sm_Z$ with \'etale local, $\AA^1$-local, model structure, stabilized for $\QQ(-1)$, and that there's a natural functor of infinity categories
\[
M(Z,\QQ)  \to  \DDA(Z,\QQ).
\]
If $\iota: X \subset \Xbar$ denotes the inclusion, we define the \emph{(motivic) hypercube compactification of $X$ associated to $\iota$} to be the functor
\[
\ka(\iota): \KK S \to M(Z,\QQ) \to \DDA(Z,\QQ)
\]
given by
\[
\ka(\iota)(l) = C_*\left(\Xbar \right),
\]
by
\[
\ka \big(\iota(S') \big) = 
	C_* \left( \bigcap_{s \in S'} W_s \right)
\]
for $S' \in P'S$, and by
\[
\ka(\iota)(u) = 0.
\]
(We note that the value on $u$ is the only part of the diagram which doesn't factor through $\Sm(Z)$. )

Combining these constructions with Poincar\'e duality, we obtain

\begin{ssprop}
\label{C*}
Let $X$ be a smooth scheme over $Z$ of relative dimension $d$. Suppose $X$ admits an open immersion $\iota: X \subset \Xbar$ in a smooth proper scheme $\Xbar$ so that the complement is a snc divisor over $Z$ (i.e. whose irreducible components are smooth over $Z$). Let $\ka(\iota)$ denote the hypercube compactification of $X$ associated to $\iota$. Then we have
\[
C^*_{\DA}(X) \sim \colim \ka(\iota)(-d)[-2d].
\]
where the (homotopy) colimit is taken in the infinity category $\DDA(Z,\QQ)$, and $\sim$ denotes homotopy equivalence, that is, isomorphism in the homotopy category $\DA(Z,\QQ)$.
\end{ssprop}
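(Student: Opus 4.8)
The plan is to deduce the Proposition by combining three ingredients, applied in the following order: the hypercube resolution \ref{hyp1}(*), a Fubini argument identifying $\colim\ka(\iota)$ with the compactly supported homological motive $C^c_*(X)$, and relative Poincar\'e duality. Write $W=\bigcup_{s\in S}W_s$ for the complement of $X$ in $\Xbar$ and, for $S'\in P'S$, put $W_{S'}=\bigcap_{s\in S'}W_s$; then the restriction of $\ka(\iota)$ to the full subcategory $P'S\subset\KK S$ is the diagram $C_*H$ of segment \ref{hyp1}, while $\ka(\iota)(l)=C_*(\Xbar)$ and $\ka(\iota)(u)$ is the zero object $0$. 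All colimits below are formed in the stable $\infty$-category $\DDA(Z,\QQ)$, into which $\ka(\iota)$ lands via $M(Z,\QQ)\to\DDA(Z,\QQ)$; the relation $\sim$ of the statement then takes place in $\DA(Z,\QQ)=\pi_0\DDA(Z,\QQ)$.

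First I would carry out the Fubini assembly. The full subcategories of $\KK S$ spanned by $P'S\cup\{l\}$ and by $P'S\cup\{u\}$ have $l$, respectively $u$, as a terminal object, and together they cover $\KK S$ with intersection $P'S$. Hence a colimit over $\KK S$ is computed in two stages: first form the colimit of the restriction to $P'S$, which by \ref{hyp1}(*) is $C_*(W)$, leaving the cospan $C_*(\Xbar)\leftarrow C_*(W)\to 0$; then form the colimit of that cospan, which is the pushout. (Formally: $\colim\ka(\iota)$ is the colimit over the cospan category $\Lambda$ of the left Kan extension of $\ka(\iota)$ along the projection $\KK S\to\Lambda$ sending $P'S$ to the center and $l,u$ to the two outer vertices.) We obtain
\[
\colim\ka(\iota)\;\sim\;C_*(\Xbar)\coprod_{C_*(W)}0\;=\;\Cone\big(C_*(W)\to C_*(\Xbar)\big),
\]
which is by definition the compactly supported homological motive $C^c_*(X)$.

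Next I would invoke relative Poincar\'e duality (relative purity; see Cisinski--D\'eglise \cite{CisDeg}) for the smooth morphism $X\to Z$ of relative dimension $d$ — applicable since $\Xbar$ is smooth and proper over $Z$ — to obtain an isomorphism $C^*_{\DA}(X)\cong C^c_*(X)(-d)[-2d]$ in $\DA(Z,\QQ)$. As the Tate twist $(-d)$ and the shift $[-2d]$ are auto-equivalences of $\DDA(Z,\QQ)$, they commute with colimits, so
\[
C^*_{\DA}(X)\;\sim\;C^c_*(X)(-d)[-2d]\;\sim\;\big(\colim\ka(\iota)\big)(-d)[-2d]\;\sim\;\colim\,\ka(\iota)(-d)[-2d],
\]
which is the assertion.

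The Fubini step and the exactness of twist and shift are formal; the substance of the argument, and where I expect the real work, is the interface with the motivic literature. Two points deserve attention. First, one must know that the hypercube resolution \ref{hyp1}(*) is valid over a general separated Noetherian base $Z$, which goes through the Quillen equivalence $\DD A(Z,\QQ)\cong\DD A_h(Z,\QQ)$ of Cisinski--D\'eglise \cite{CisDegEt} already used in segment \ref{hyp1}, and that the cofiber produced above genuinely computes $C^c_*(X)$ — resting on the localization/Gysin triangle together with the properness of $\Xbar$ over $Z$. Second, and most delicately, one must pin down relative Poincar\'e duality in $\DA(Z,\QQ)$ with the precise normalization $(-d)[-2d]$, valid over $Z$ under the running hypotheses. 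I expect this last point to be the main obstacle, the remainder being bookkeeping.
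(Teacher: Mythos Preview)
Your proposal is correct and follows essentially the same approach as the paper: both combine the identification $\colim\ka(\iota)\sim C^c_*(X)$ from segment \ref{hyp1} with relative Poincar\'e duality $C^*(X)\sim C^c_*(X)(-d)[-2d]$ from Cisinski--D\'eglise \cite{CisDeg}. The paper's proof is terser, simply citing these two facts in the opposite order, while you spell out the Fubini step and the exactness of twist and shift more carefully; the substance is identical.
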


\begin{proof}
We have
\begin{align*}
C^*(X) &\sim
	C_*^c(X)(-d)[-2d] \\
\intertext{by Poincar\'e duality \cite[A.5.2(8), A.5.1(6) in the version of 2012]{CisDeg}}
	&\sim \colim \ka(\iota)(-d)[-2d]
\end{align*}
by segment \ref{hyp1}.

\end{proof}

\segment{ad1}{Adams pieces of rational K-spectra}
We now recall the link to K-theory. 
We define $\Kk^{(i)} := \QQ(i)[2i]$. Then we have on the one hand a decomposition of the rational K-theory spectrum in $\DA(Z,\QQ)$
\[
K_\QQ = \bigoplus_{i \in \ZZ} \Kk^{(i)},
\]
and on the other hand, for $X$ smooth over $Z$ and $j \in \ZZ$, we have canonical isomorphisms of $\QQ$-\textit{vector spaces}
\begin{align*}
\pi_j(\Kk^{(i)}(X)) &=
	\pi_j \Hom_{\DDA(Z,\QQ)} (C_*X, \QQ(i)[2i])
\\
	&= \Hom_{\DA(Z,\QQ)}(C_*X, \QQ(i)[2i-j])
\\
	&= K^{(i)}_j(X),
\end{align*}
the $i$th Adams piece of the rational K-group $K_j(X) \otimes \QQ$. These facts are summarized in Cisinski-D\'eglise \cite[\S14,16]{CisDeg} and go back to work of Ayoub, Morel and Voevodsky.
 
\begin{sprop}
\label{mosc1}
Let $X,Y$ be smooth schemes over $Z$. Suppose $X$ admits an open immersion 
\[
\iota: X \subset \overline X
\]
into a smooth proper scheme over $Z$ with snc divisor complement
\[
W = \bigcup_{i \in I} W_i.
\]
Let $d$ denote the relative dimension of $X$ over $Z$. Recall that
\[
\ka(\iota): \KK I \to \DDA(Z,\QQ)
\]
denotes the hypercube compactification of $X$ associated to $\iota$ (\ref{hyp1}). Let $\ka(\iota \times Y)$ denote the composite functor 
\[
 \KK I \xto{\ka(\iota)} \DDA(Z,\QQ)
 	\xto{\otimes C_*Y} \DDA(Z,\QQ),
\]
and let $\Kk^{(d)}\big( \ka(\iota \times Y) \big)$ denote the induced functor to spectra 
\[
\KK I \op 
	\xto{\ka(\iota \times Y)\op}
	\DDA(Z,\QQ)\op
	\xto{\Hom(\cdot, \QQ(d)[2d])}
	\mbox{Spectra}.
\]
Then there is an isomorphism of (stable, rational) homotopy types 
\[
\Hom_{\DDA(Z,\QQ)}(C^*X, C^*Y) =
	 \lim \Kk^{(d)} \big( \ka (\iota \times Y) \big).
\]
In particular, if $X$ itself is proper, then
\[
\Hom_{\DDA(Z,\QQ)}(C^*X, C^*Y) =  \Kk^{(d)}(X \times Y).
\]
\end{sprop}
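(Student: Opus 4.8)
The plan is to reduce the statement, via Proposition \ref{C*}, to a formal manipulation of colimits, Tate twists, and mapping spectra in the stable $\infty$-category $\DDA(Z,\QQ)$; the only ingredient beyond Proposition \ref{C*} and segment \ref{ad1} will be the description of a cohomological motive as an internal hom. Concretely, for $g\colon W\to Z$ smooth the projection formula gives
\[
\underline{\Hom}_{\DA(Z,\QQ)}\big(C_*W,\one\big)=\underline{\Hom}(g_\#\one_W,\one)=g_*\underline{\Hom}(\one_W,g^*\one)=g_*\one_W=C^*W,
\]
and hence, by the tensor--hom adjunction, $\Hom_{\DDA}(A,C^*Y)=\Hom_{\DDA}(A\otimes C_*Y,\one)$ for every motive $A$. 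I will apply this with $A=C^*X$.

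Feeding in Proposition \ref{C*}, and using that $-\otimes C_*Y$ and Tate twisting preserve colimits (and commute with one another), and that $\Hom_{\DDA}(-,\one)$ sends colimits to limits, I would then compute
\begin{align*}
\Hom_{\DDA}(C^*X,C^*Y)
&=\Hom_{\DDA}\big(C^*X\otimes C_*Y,\ \one\big)\\
&=\Hom_{\DDA}\Big(\colim_{S'\in\KK I}\ \ka(\iota\times Y)(S')(-d)[-2d],\ \one\Big)\\
&=\lim_{S'\in(\KK I)\op}\Hom_{\DDA}\big(\ka(\iota\times Y)(S')(-d)[-2d],\ \one\big)\\
&=\lim_{S'\in(\KK I)\op}\Hom_{\DDA}\big(\ka(\iota\times Y)(S'),\ \QQ(d)[2d]\big)\\
&=\lim\Kk^{(d)}\big(\ka(\iota\times Y)\big),
\end{align*}
where the penultimate equality uses invertibility of the Tate twist ($\one(d)[2d]=\QQ(d)[2d]$) and the last is the definition of the functor $\Kk^{(d)}(\ka(\iota\times Y))$ recalled in segment \ref{ad1}. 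The one point needing care is that every identification here is natural in the object $S'$ of $\KK I$, so that the equalities hold already at the level of diagrams $(\KK I)\op\to\mathbf{Spectra}$, hence of their limits; this holds because each ingredient used (the projection formula, the tensor--hom adjunction, the equivalence of Proposition \ref{C*}, Tate twisting) is natural. Since $\DDA(Z,\QQ)$ is stable, the displayed limit is formed in spectra, so the identification is one of stable homotopy types, as asserted.

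For the final clause, when $X$ is already proper I would take $\Xbar=X$, so that the snc complement is empty, $I=\varnothing$, and $\KK\varnothing$ is the discrete category on the two objects $l,u$ with $\ka(\iota)(l)=C_*X$ and $\ka(\iota)(u)=0$. Then $\ka(\iota\times Y)(l)=C_*X\otimes C_*Y=C_*(X\times_Z Y)$ by K\"unneth (once more the projection formula) and $\ka(\iota\times Y)(u)=0$, so the limit over $(\KK\varnothing)\op$ is the product
\[
\Hom_{\DDA}\big(C_*(X\times Y),\QQ(d)[2d]\big)\times\Hom_{\DDA}\big(0,\QQ(d)[2d]\big)=\Kk^{(d)}(X\times Y),
\]
the second factor being contractible since $0$ is initial in $\DDA(Z,\QQ)$.

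I do not anticipate a genuine obstacle: the real content --- Poincar\'e duality and $h$-descent, packaged in Proposition \ref{C*}, together with the comparison of Adams pieces of rational $K$-theory with $\DDA$-mapping spectra in segment \ref{ad1} --- has already been imported. What remains is the naturality bookkeeping above, and the verification that the internal-hom identity $C^*Y=\underline{\Hom}(C_*Y,\one)$ does apply to the (possibly non-proper) $Y$ at hand; the latter is standard, but is a statement about the six operations rather than merely about the symmetric monoidal structure of $\DA(Z,\QQ)$.
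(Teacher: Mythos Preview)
Your proposal is correct and follows essentially the same route as the paper's proof: the identity $\Hom_{\DDA}(C^*X,C^*Y)=\Hom_{\DDA}(C^*X\otimes C_*Y,\one)$, then Proposition~\ref{C*}, then pulling the colimit out as a limit and absorbing the Tate twist. You supply more justification than the paper does (the projection-formula derivation of $C^*Y=\underline{\Hom}(C_*Y,\one)$, the naturality in $S'$, and the explicit unwinding of the proper case via $\KK\varnothing$), but the argument is the same.
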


\begin{proof}
It is a fundamental fact of infinity categories that homotopy equivalent objects (i.e. which become equivalent in the homotopy category) give rise to homotopy equivalent Hom-spaces. 
 So the computation of
\[
\Hom_{\DDA}(C^*X, C^*Y)
\]
may be carried out with the help of the homotopy equivalences recorded above. 

In particular, we have isomorphisms of homotopy types
\begin{align*}
\Hom_{\DDA}(C^*X, C^*Y) &=
	\Hom_{\DDA} 
		\big( C^*X \otimes C_*Y, \QQ(0) \big)
\\
	&= \Hom_{\DDA}
		\big( 
		\colim \kappa(\iota) \otimes C_*Y, \QQ(d) [2d]
		\big)
\\	
	&= \Hom_{\DDA}
		\big( 
		\colim \kappa(\iota\times Y), \QQ(d) [2d]
		\big)
\\
	&= \lim \Hom_{\DDA}
		\big( 
		\kappa(\iota \times Y), \QQ(d)[2d] 
		\big) 
\\
	&= \lim \Kk^{(d)} \big(
		\ka(\iota \times Y)
		\big)
\end{align*}
as claimed.
\end{proof}

\begin{sthm}[Comparison with K-theory]
\label{nmc}
Let $X,Y$ be smooth schemes over $Z$. Suppose $X$ admits an open immersion 
$
\iota: X \subset \overline X
$
into a smooth proper scheme over $Z$ so that the complement is a relative snc divisor over $Z$. Let $d$ denote the relative dimension of $X$ over $Z$. For any finite set $T$, we let $\iota^T$ denote the inclusion
\[
X^T \subset \Xbar^T.
\]
Recall that
$
\ka(\iota^T)
$
denotes the motivic hypercube compactification of $X^T$ associated to $\iota^T$ (\ref{hyp1}), a diagram in the infinity category $\DDA(Z,\QQ)$. Recall that $\ka(\iota^T \times Y)$ denotes the same diagram crossed with $Y$ (\ref{mosc1}). Recall that for $e \in \NN$,
\[
\Kk^{(e)} \big( 
	\ka(\iota^T \times Y)
	\big)
\]
denotes the associated diagram of Adams pieces of $K$-spectra (\ref{mosc1}). Then we have an isomorphism of homotopy types
\[
\tag{*}
\Hom_{\DDMdga}(C^*X, C^*Y) =
 \lim_{ k \in \Delta}
 \Omega^\infty
  \lim_{S_1 \to \cdots \to S_k}
  \Big( \lim
  \Kk^{(ds_1)} \big(
  \kappa
  (\iota^{S_1}\times Y)
  \big)
  \Big).
 \]
 In particular, if $X$ is proper, then
 \[
\Hom_{\DDMdga}(C^*X, C^*Y) =
 \lim_{k \in \Delta}
 \Omega^\infty
  \lim_{S_1 \to \cdots \to S_k}
  \big(  \Kk^{(ds_1)} (X^{S_1}\times Y) \big).
 \]
\end{sthm}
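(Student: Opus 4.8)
The plan is to compose Proposition~\ref{monres} with Proposition~\ref{mosc1}, keeping careful track of the distinction between limits of spectra and limits of spaces. I would begin by invoking Proposition~\ref{monres} --- equivalently, Theorem~\ref{monresabs} with $\Cc^\otimes = \DDA(Z,\QQ)$, $A = C^*X$, $B = C^*Y$ --- which produces a diagram of the shape indicated in the statement (a totalization over $\Delta$ of limits over the groupoids $\Fin^k_\gr$), whose vertex at $(S_1 \to \cdots \to S_k)$ is the space $\Hom_{\DDA}\big((C^*X)^{\otimes S_1}, C^*Y\big)$ --- the tensor power formed on the underlying object of $C^*X$ --- and whose limit is $\Hom_{\DDMdga}(C^*X, C^*Y)$. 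Since the cohomological motive functor takes finite fibre products over $Z$ of smooth schemes to tensor products (K\"unneth: $C_*(X^{S_1}) = C_*(X)^{\otimes S_1}$, together with compatibility of duality with tensor products), I may rewrite each vertex as $\Hom_{\DDA}\big(C^*(X^{S_1}), C^*Y\big)$, naturally in $S_1 \in \Fin_\gr$. Now $X^{S_1}$ is smooth of relative dimension $d s_1$ over $Z$ and admits the compactification $\iota^{S_1}\colon X^{S_1} \subset \Xbar^{S_1}$ into a smooth proper $Z$-scheme whose complement $\bigcup_{s\in S_1,\,i\in I}\mathrm{pr}_s^{-1}(W_i)$ is again a relative snc divisor (a product of relative snc pairs is a relative snc pair). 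Thus Proposition~\ref{mosc1}, applied to $X^{S_1}$, yields an equivalence of spectra
\[
\Hom_{\DDA}\big(C^*(X^{S_1}), C^*Y\big) \;=\; \lim \Kk^{(d s_1)}\big(\ka(\iota^{S_1}\times Y)\big),
\]
and, when $X$ is proper, the final clause of Proposition~\ref{mosc1} reduces the right-hand side to $\Kk^{(d s_1)}(X^{S_1}\times Y)$.

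Next I would sort out which limit is taken in which category. Because $\DDA(Z,\QQ)$ is stable and presentable, its mapping spaces are canonically infinite loop spaces: $\Hom_{\DDA}(-, C^*Y)$ refines to a mapping-spectrum functor with $\Hom_{\DDA} = \Omega^\infty \underline{\Hom}_{\DDA}$. So, for each fixed $k$, the $\Fin^k_\gr$-indexed diagram of the previous paragraph is a diagram of spectra; its limit over $\Fin^k_\gr$, as well as the inner limit appearing in Proposition~\ref{mosc1}, may be formed in spectra, and as $\Omega^\infty$ is a right adjoint it commutes with both. This gives
\[
\lim_{S_1\to\cdots\to S_k}\Hom_{\DDA}\big((C^*X)^{\otimes S_1}, C^*Y\big) \;=\; \Omega^\infty \lim_{S_1\to\cdots\to S_k}\Big(\lim \Kk^{(d s_1)}\big(\ka(\iota^{S_1}\times Y)\big)\Big).
\]
On the other hand, the cosimplicial ($\Delta$-shaped) structure produced by the (co)bar resolution of Theorem~\ref{monresabs} has coface and codegeneracy maps built from the multiplication of $C^*Y$ and from the comonad $W$; these are maps of spaces, not of spectra, so the outer totalization $\lim_{k\in\Delta}$ must be performed in $\Ss$ --- which is exactly why $\Omega^\infty$ sits between the two limits in~(*). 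Forming $\lim_{k\in\Delta}$ of the displayed equality in $\Ss$ yields~(*); substituting the proper-case simplification recorded above gives the ``in particular'' statement.

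The point that requires genuine care --- and what I expect to be the main obstacle --- is coherence: the vertexwise identifications above must be promoted to an equivalence of \emph{diagrams} over the indexing category of Proposition~\ref{monres}, not merely to isomorphisms at each vertex. The K\"unneth identification $(C^*X)^{\otimes S_1}\cong C^*(X^{S_1})$, the Poincar\'e-duality and $h$-descent identifications underlying Proposition~\ref{mosc1} (see \S\ref{hyp1} and Proposition~\ref{mosc1}), and the passage to mapping spectra are each natural in the smooth compactified pair $\big(X^{S_1}\subset\Xbar^{S_1}\big)$; since every morphism of $\Fin^k_\gr$ is an isomorphism of finite sets, it induces an isomorphism of such pairs, so one obtains the desired natural transformation of $\Fin^k_\gr$-diagrams, and then of $\Delta$-diagrams by checking compatibility with the bar-resolution structure maps (using that each $\Kk^{(e)}(-)$ is a functor out of $\DDA$ and that the hypercube-compactification construction of \S\ref{hyp1} is functorial in its input). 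Granting this naturality, the chain of equivalences upgrades to an equivalence of the two limit diagrams, and the theorem follows.
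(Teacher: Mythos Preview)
Your proposal is correct and follows exactly the same approach as the paper: the paper's proof is the single sentence ``Formula~(*) combines proposition~\ref{monres} with proposition~\ref{mosc1},'' and your write-up simply unpacks that combination, supplying the K\"unneth, snc-on-products, and $\Omega^\infty$ bookkeeping that the paper leaves implicit. The coherence discussion you flag is likewise not addressed explicitly in the paper.
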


\begin{proof}
Formula (*) combines proposition \ref{monres} with proposition \ref{mosc1}.
\end{proof}

\segment{mar7}{Remark}
The outermost limits above (in both cases) involve a $\Delta$-shaped diagram
\[
\tilde \mu: N(\Delta) \to \Ss
\]
in the infinity category of spaces. Here $N(-)$ denotes the simplicial nerve of a category. We describe the associated functor 
\[
N(\Delta) \to \Ss \to \Hh
\]
to the homotopy category, which we denote by $\mu$.
We restrict attention to the case of $X$ proper, $Y = Z$.

On the level of objects, we then have
\[
\mu(k) = 
 \lim_{S_1 \to \cdots \to S_k}
  \big(  \Kk^{(ds_1)} (X^{S_1}) \big).
\]
For $n$ a natural number, we write $[n] := \{ 0, \dots, n\}$.
Now consider a fixed but arbitrary natural number $k$ and consider the map 
\[
s: [k] \to [k+1]
\]
given by
\[
s(i) := i+1.
\]
Then $\mu(s)$ is a map of homotopy types as shown in the top row of the following diagram.
\[
\xymatrix
{
\underset{S = (S_1 \to \cdots \to S_k)}
 \colim
  \big(  \Kk^{(ds_1)} (X^{S_1}) \big)
  \ar[r]
   \ar[d]_-{\Pr_{S' = (S_2 \to \cdots \to S_{k+1})}}
  &
 \underset{S = (S_1 \to \cdots \to S_{k+1})}
 \colim
  \big(  \Kk^{(ds_1)} (X^{S_1}) \big)  
  \ar[d]^-{\Pr_{S = (S_1 \to \cdots \to S_{k+1})}}
\\
  \Kk^{(ds_2)} (X^{S_2}) 
 \ar@{.>}[r]_{\phi^*}
	&
 \Kk^{(ds_1)} (X^{S_1}) 
}
\]
In fact, the map $\mu(s)$ is compatible with the projection maps and is given, for each $S$, by a morphism $\phi^*$ of K-spectra as shown in the bottom row. We now describe its motivic origin.

Let $\phi$ denote the map $S_2 \from S_1$ in $S$. Then $\phi$ induces a map of schemes 
\[
X^{S_2} \to X^{S_1}
\]
hence a morphism in $\DDA$
\[
C^*(X^{S_2}) \from C^*(X^{S_1}).
\]
By Poincar\'e duality, we have a homotopy equivalence
\[
C^*(X^{S_1}) \sim C_*(X^{S_1})(-ds_1)[-2ds_1]
\]
and similarly for $C^*(X^{S_2})$. Composing and taking hom into $\QQ(0)$ we obtain a morphism 
\begin{align*}
\Kk^{(ds_2)} (X^{S_2}) \cong 
	&
	\Hom_{\DDA} \big( C_*(X^{S_2}) ,  \QQ(ds_2)[2ds_2] \big)
\\
	& \to
	\Hom_{\DDA} \big( C_*(X^{S_1}) , \QQ(ds_1)[2ds_1] \big)
	\cong
	\Kk^{(ds_1)} (X^{S_1})
\end{align*}
as desired. 

All other maps are simply projections.

\segment{3sepa}{}
In the situation and the notation of our \textit{comparison} theorem (\ref{nmc}), let
\[
b:Y \to X
\]
be a morphism of $Z$-schemes and denote also by $b$ the associated point of the topological space
\[
\Hom_{\DDMdga}(C^*X, C^*Y).
\]
We now indicate how the construction of Bousfield \cite[\S2]{BousfieldSS} applies to give a spectral sequence which computes the homotopy groups 
\[
\pi_i 
\left(
\Hom_{\DDMdga}(C^*X, C^*Y)
,
b
\right),
\]
the set of greatest interest
\[
\Hom_{\DMdga}(C^*X, C^*Y)
=
\pi_0 
\left(
\Hom_{\DDMdga}(C^*X, C^*Y)
\right)
\]
among them.

\ssegment{coal}{}
The functor $\tilde \mu: N(\Delta) \to \Ss$ lifts to a functor 
\[
N(\Delta) \to N(\m{sSet})
\]
to the nerve of the ordinary category of simplicial sets.The latter is equivalent, in turn, to a functor
\[
M^\bullet:\Delta \to \m{sSet}
\]
between ordinary categories. 

\ssegment{ath1}{}
Let $T^\bullet_*$ be a fibrant cosimplicial simplicial set with simplicial index `$*$' and cosimplicial index `$\bullet$'. Let $T^\bullet$ denote the associated functor 
\[
\Delta \to \m{sSet}
\]
to the (ordinary) category of simplicial sets, which we may regard further as a functor 
\[
N(T^\bullet): N(\Delta) \to N(\m{sSet}) \to \Ss
\]
to the infinity category of spaces. We set
\[
\Tot T^\bullet = \lim N(T^\bullet)
\]
where the limit is taken in $\Ss$. For each $k \in \NN$, we define
\[
\cosk_k T^\bullet: \Delta \to \m{sSet}
\]
by
\[
[n] \mapsto \cosk_k(T^n).
\]
We again have an associated functor $N(\cosk_k T^\bullet)$ to the infinity category of spaces, and we set 
\[
\Tot_k T^\bullet : = \lim N(\cosk_k T^\bullet).
\]

\ssegment{cobalt}{}
Fix a point $b\in \Tot T^\bullet$. There's a natural way to associate to $b$ a vertex $b_m \in T^m$ for each $m$ so as to make 
\[
\pi_i(T^\bullet, b) := \{ \pi_i(T^m, b_m) \}_{m \in \NN}
\]
into a ``cosimplicial group''; see \cite[\S2.1]{BousfieldSS} for details. In turn, given a cosimplicial group $G^\bullet$, Bousfield constructs \emph{cohomotopy groups} $\pi^j(G^\bullet)$. There is then a homotopy (or \emph{fringe}) spectral sequence with 2\textit{nd} page 
\[
E_2^{s,t}(T^\bullet, b) = \pi^s \pi_t(T^\bullet, b)
\]
and final page 
\[
E_\infty^{t-s} = \pi_{t-s}(\Tot T^\bullet, b)
\]
\cite[\S2.4]{BousfieldSS}.

\ssegment{lead}{}
We apply the above construction to the cosimplicial simplicial set $M^\bullet$ of segment \ref{coal} and to the base point $b$ of segment \ref{3sepa}. Recall that
\[
M^k=
\Omega^\infty
  \lim_{S = (S_1 \to \cdots \to S_k)}
  \Big( \lim
  \Kk^{(ds_1)} \big(
  \kappa
  (\iota^{S_1}\times Y)
  \big)
  \Big),
\]
and that in particular, when $X$ is proper over $Z$ we have
\[
M^k = 
\Omega^\infty
  \lim_{S = (S_1 \to \cdots \to S_k)}
  \Kk^{(ds_1)} (X^{S_1}\times Y) .
\]
We obtain a homotopy spectral sequence
\[
E_2^{s,t} =
\pi^s \pi_t (M^\bullet, b) 
\Rightarrow
 \pi_{t-s} 
 \left(
 \Hom_{\DDMdga}(C^*X, C^*Y)
 ,
 b
 \right).
\]

\section{Motivic Chow coalgebras and the case of finite \'etale $k$-algebras}

\segment{Mercury}{}
Let $Z = \Spec k$, $k$ a field. We consider the category $\CAlg \DA(Z, \QQ)$ of commutative monoid objects in the homotopy category $\DA(Z,\QQ)$ of $\DDA(Z,\QQ)$. There's a natural functor 
\[
\CAlg \DDA(Z,\QQ) \to \CAlg \DA(Z,\QQ).
\]

\segment{15augb}{}
Let $\DDA^c(Z,\QQ)$, $\DA^c(Z,\QQ)$ denote subcategories of compact objects. These categories admit strong-duals. We denote the duality functor by $D$. The functor $D$ induces an equivalence 
\[
D: \CAlg \DA^c(Z,\QQ) \to \CCoalg \DA^c(Z,\QQ)
\]
from commutative monoid objects to commutative comonoid objects. 

\segment{mchc}{}
If $X$, $Y$ are smooth proper over $Z$, then by \cite[4.2.6]{VoevTriCat} we have
\[
\Hom_{\ChM(Z,\QQ)}(C_*X, C_*Y) 
= \Ch_{\dim X}(X \times Y),
\]
the Chow group of cycles of dimension equal to the dimension of $X$. Moreover, under these isomorphisms, composition of morphisms corresponds to composition of correspondences \cite{VoevChow}. We let $\ChM(Z, \QQ)$ denote the Karoubian envelope of the full subcategory of $\DA^c(Z,\QQ)$ consisting of Tate twists of homological motives of smooth proper schemes. We let
\[
\MChC(Z) := \CCoalg \ChM(Z,\QQ)
\]
denote the category of commutative comonoids. 

\segment{15auga}{}
Let $\SmPr_{/Z}$ denote the category of smooth proper schemes.  The homological motives functor lifts naturally to a functor 
\[
C_*  = C_*^{\MChC}(-,\QQ):
\SmPr_{/Z} \to \MChC(Z,\QQ).
\]
If $X$ is smooth and proper over $Z$, the counit 
\[
C_*(X) \to C_*(Z) = \QQ(0)
\]
is induced by the structure morphism $X \to Z$, and the comultiplication 
\[
C_*(X) \to C_*(X\times X) =C_*(X) \otimes C_*(X) 
\]
is induced by the diagonal of $X$ and by the K\"unneth isomorphism.

\begin{sprop}[Artin motivic Chow coalgebras]
\label{mcffe}
Let $Z = \Spec k$, $k$ a field. If $X,Y$ are finite \'etale over $k$ then
\[
\Hom_{\MChC(Z,\QQ)}(C_*X, C_*Y) = \Hom_Z(X,Y).
\] 
\end{sprop}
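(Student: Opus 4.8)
The plan is to pass to a separable closure $\bar k$ of $k$, identify $C_*X$ for $X$ finite \'etale with the permutation representation $\QQ[X(\bar k)]$ of $\Gamma := \Gal(\bar k/k)$ equipped with its canonical ``grouplike'' coalgebra structure, and then reduce the claim to an elementary computation with grouplike elements. Recall that $X \mapsto X(\bar k)$ is an equivalence from finite \'etale $k$-schemes to finite continuous $\Gamma$-sets, so $\Hom_Z(X,Y) = \Hom_{\Gamma\hyph\Set}\big(X(\bar k), Y(\bar k)\big)$. On the motivic side, $C_*X$ is the Artin motive which after base change to $\bar k$ becomes $\bigoplus_{x \in X(\bar k)} \QQ(0)$, and I identify this $\bar k$-motive with $\QQ[X(\bar k)]$. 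By segment \ref{mchc}, $\Hom_{\ChM(Z,\QQ)}(C_*X, C_*Y) = \Ch_0(X\times_k Y)_\QQ$; since $X\times_k Y$ is $0$-dimensional this is the $\QQ$-span of its closed points, i.e. the $\Gamma$-invariants of $\QQ[X(\bar k)\times Y(\bar k)] = \Hom_\QQ\big(\QQ[X(\bar k)], \QQ[Y(\bar k)]\big)$ (permutation representations being self-dual). Thus $\Hom_{\ChM(Z,\QQ)}(C_*X, C_*Y) = \Hom_\Gamma\big(\QQ[X(\bar k)], \QQ[Y(\bar k)]\big)$, compatibly with composition (segment \ref{mchc}) and, since $C_*$ and the K\"unneth isomorphism commute with base change, with the comonoid structures described in segment \ref{15auga}.

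Under this identification the comultiplication $C_*X \to C_*X \otimes C_*X$ induced by the diagonal sends $[x] \mapsto [x] \otimes [x]$ and the counit $C_*X \to \QQ(0)$ sends $[x]\mapsto 1$; that is, $\QQ[X(\bar k)]$ is the coalgebra in which every basis vector is grouplike, and an element of $\Hom_{\MChC(Z,\QQ)}(C_*X, C_*Y)$ is exactly a $\Gamma$-equivariant coalgebra map $\phi\colon \QQ[X(\bar k)]\to \QQ[Y(\bar k)]$. Writing $\phi([x]) = \sum_{y} m_{xy}[y]$, counit compatibility gives $\sum_y m_{xy} = 1$ for each $x$, while comultiplication compatibility gives $m_{xy}m_{xy'} = 0$ for $y \ne y'$ and $m_{xy} = m_{xy}^2$ for each $y$; for each fixed $x$ these force exactly one coefficient to equal $1$ and all others to vanish. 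Hence $\phi = \QQ[g]$ for a unique map of sets $g\colon X(\bar k)\to Y(\bar k)$, every such $\QQ[g]$ is a coalgebra map, and $\phi$ is $\Gamma$-equivariant iff $g$ is. Combining with the previous paragraph, $\Hom_{\MChC(Z,\QQ)}(C_*X,C_*Y) = \Hom_{\Gamma\hyph\Set}(X(\bar k),Y(\bar k)) = \Hom_Z(X,Y)$, and one checks that this bijection is realized by $C_*$ itself by inspecting the effect of $C_*(h)$ on $\bar k$-points for $h\colon X\to Y$.

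The only delicate point is the bookkeeping in the first paragraph: that the correspondence description of $\Hom_{\ChM}$ (segment \ref{mchc}) specializes in relative dimension $0$ to $\Gamma$-invariant $0$-cycles, and that base change to $\bar k$ is compatible with the comonoid structure of segment \ref{15auga}, so that ``morphism of commutative comonoids'' genuinely translates into ``$\Gamma$-equivariant coalgebra map''. Once this is in place, the computation with grouplikes --- which is the actual content --- is immediate.
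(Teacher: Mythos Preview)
Your proof is correct and follows essentially the same route as the paper: both reduce via Galois descent to the algebraically closed case, identify the Chow coalgebra of a finite \'etale scheme with the ``grouplike'' coalgebra on its set of points, and derive the same three equations $\sum_y m_{xy}=1$, $m_{xy}^2=m_{xy}$, $m_{xy}m_{xy'}=0$ for $y\neq y'$ to force each column to be a standard basis vector. The only difference is packaging --- you phrase the reduction in terms of $\Gamma$-equivariant coalgebra maps between permutation representations, while the paper works explicitly with matrices in $\QQ\langle\mbox{fin. set}\rangle$ --- but the content is the same.
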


The proof, which amounts to a simple computation, spans segments \ref{mosc2}-\ref{mosc3}.

\segment{mosc2}{}
For any $k$-scheme $T$ we let $Z_i(T)$ denote the $\QQ$-vector space of dimension $i$ cycles on $T$ with $\QQ$-coefficients. We let $\ep_T$ denote the structure morphism of $T$ 
\[
T \to Z,
\]
and  $\delta_T$  the diagonal
\[
T \to T \times_k T,
\]
In terms of the morphisms $\ep$ and $\delta$ we define 
\[
Z_0^\mu (X,Y)
\]
to be the $\QQ$-vector subspace of $Z_0(X \times Y)$ of 0-cycles $C$ such that the two diagrams in $\MChC(Z,\QQ)$ 
\[
\xymatrix{
(C_*X)^{\otimes 2}
	\ar[r]^{C^{\otimes 2}} &
C_*Y^{\otimes 2} &
C_*X \ar[rr]^C \ar[dr]_{\ep_X} &
&
C_*Y \ar[dl]^{\ep_Y}
\\
C_*X \ar[r]_C \ar[u]^{\delta_X} &
C_*Y \ar[u]_{\delta_Y} &
&
\QQ(0)
}
\]
commute.

Since $W := X \times_k Y$ is zero dimensional, we have
\[
Z_0(W) = \Ch_0(W)
\]
 inducing an equality of subspaces 
 \[
 Z_0^\mu(X,Y) = \Hom_{\MChC}(C_*X, C_*Y).
 \]
 
 \begin{slm}
 \label{mosc2.1}
 Fix an algebraic closure $\kbar$ of $k$ and let $G$ denote the corresponding Galois group. Then
 \[
 \Hom_k(X,Y) = \Hom_\kbar(X_\kbar, Y_\kbar)^G.
 \]
 \end{slm}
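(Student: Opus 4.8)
The plan is to deduce the lemma from the classical Galois correspondence for finite \'etale schemes. First I would recall that sending a finite \'etale $k$-scheme $T$ to its set of $\kbar$-points $T(\kbar)$ defines an equivalence between the category of finite \'etale $k$-schemes and the category of finite sets equipped with a continuous action of $G$; in particular $\Hom_k(X,Y)$ is thereby identified with the set of $G$-equivariant maps $X(\kbar)\to Y(\kbar)$. Base changing to $\kbar$ trivializes the Galois action: $X_{\kbar}$ and $Y_{\kbar}$ are split, so after the canonical identifications $X_{\kbar}(\kbar)=X(\kbar)$ and $Y_{\kbar}(\kbar)=Y(\kbar)$ we get $\Hom_{\kbar}(X_{\kbar},Y_{\kbar})=\Hom_{\Set}\big(X(\kbar),Y(\kbar)\big)$, the set of \emph{all} maps of finite sets.

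Next I would pin down the $G$-action appearing in the statement. It is the one obtained by transport of structure from the semilinear $G$-action on $X_{\kbar}$ and $Y_{\kbar}$ (the descent datum attached to $X$ and $Y$ being defined over $k$): for $\sigma\in G$ and $f\colon X_{\kbar}\to Y_{\kbar}$ one sets $\sigma\cdot f=\sigma_Y\circ f\circ\sigma_X^{-1}$. Unwinding the identifications of the previous paragraph --- a routine check on $\kbar$-points --- this becomes exactly the conjugation action of $G$ on $\Hom_{\Set}\big(X(\kbar),Y(\kbar)\big)$ induced by the given $G$-actions on source and target.

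Finally, a map of finite sets $f\colon X(\kbar)\to Y(\kbar)$ is fixed by this conjugation action if and only if it is $G$-equivariant, so stringing the three steps together gives
\[
\Hom_{\kbar}(X_{\kbar},Y_{\kbar})^G=\Hom_G\big(X(\kbar),Y(\kbar)\big)=\Hom_k(X,Y),
\]
which is the claim. There is no serious obstacle here; the only point requiring a little care is the middle step, namely identifying the ambient $G$-action on the Hom-set with conjugation. One can sidestep the $G$-set language entirely by instead choosing a finite Galois extension $k'/k$ over which every $\kbar$-morphism $X_{\kbar}\to Y_{\kbar}$ in sight is defined --- possible since there are only finitely many such morphisms and each descends to a finite subextension --- and applying Galois descent along the finite faithfully flat morphism $\Spec k'\to \Spec k$: a $\kbar$-invariant morphism is then $\Gal(k'/k)$-equivariant for the natural descent data, hence descends uniquely to a $k$-morphism $X\to Y$.
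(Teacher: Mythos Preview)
Your proof is correct and follows exactly the two approaches the paper itself names: Grothendieck's main theorem of Galois theory (your $G$-set argument) and fpqc/Galois descent (your alternative at the end). The paper's proof is a two-sentence pointer to these same facts, so you have simply spelled out what the authors left implicit.
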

 
 \begin{proof}
 This may be viewed as a direct application of Grothendieck's main theorem of Galois theory. It also follows from fpqc descent by a familiar computation. 
 \end{proof}
 
 \begin{slm}
 \label{mosc2.2}
 In the situation and the notation of segments \ref{mcffe}--\ref{mosc2.1}, we have
 \[
 Z_0(W) = Z_0(W_\kbar)^G.
 \]
 \end{slm}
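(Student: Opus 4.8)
The plan is to reduce the statement to a transparent piece of the representation theory of the profinite group $G$ acting on a finite set. Since $X$ and $Y$ are finite \'etale over $k$, so is $W = X \times_k Y$; hence $W$ is a finite disjoint union of spectra of finite separable field extensions of $k$, and $Z_0(W)$ is simply the free $\QQ$-vector space on the (finite) set of closed points of $W$. Likewise $W_\kbar = W \times_k \kbar$ is finite \'etale over $\kbar$, so $W_\kbar \cong \coprod_{w' \in W(\kbar)} \Spec \kbar$ and $Z_0(W_\kbar)$ is the free $\QQ$-vector space $\QQ[W(\kbar)]$ on the finite set $W(\kbar)$, carrying the natural (continuous) permutation action of $G$ through the second factor $\Spec \kbar$.

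Next I would pin down the map realizing $Z_0(W)$ as a subspace of $Z_0(W_\kbar)$: it is the flat pullback along the faithfully flat morphism $\pi \colon W_\kbar \to W$. For a closed point $w \in W$ with residue field $\kappa(w)$ --- a finite separable extension of $k$ because $W/k$ is \'etale --- the fiber $\pi^{-1}(w) = \Spec(\kappa(w) \otimes_k \kbar)$ is a disjoint union of $[\kappa(w):k]$ reduced $\kbar$-points, each occurring with multiplicity one; thus $\pi^*[w] = \sum_{w' \mapsto w} [w']$, the sum taken over the points of $W_\kbar$ lying over $w$. Under the standard identification of the closed points of $W$ with the $G$-orbits on $W(\kbar)$ (the fiber of $\pi$ over $w$ being exactly such an orbit), this shows that $\pi^*$ is injective and that its image is precisely the $\QQ$-span of the orbit-sums $\{\, \sum_{w' \in \mathcal O} [w'] : \mathcal O \in W(\kbar)/G \,\}$.

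It remains to observe that for any finite $G$-set $\Sigma$ the invariant subspace $\QQ[\Sigma]^G$ has the orbit-sums as a $\QQ$-basis: an invariant combination must be constant on orbits, and the orbit-sums are linearly independent because distinct orbits are disjoint. Applying this with $\Sigma = W(\kbar)$ identifies the image of $\pi^*$ with $Z_0(W_\kbar)^G$, which gives the claimed equality $Z_0(W) = Z_0(W_\kbar)^G$. I do not expect any real obstacle here; the only point deserving care is the absence of multiplicities in $\pi^*$, which is exactly where \'etaleness (hence separability of the residue fields, relevant when $k$ is imperfect) is used --- one could alternatively sidestep this by replacing $W$ and $W_\kbar$ by their reductions, since $Z_0$ only depends on the underlying reduced schemes.
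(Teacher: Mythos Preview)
Your proof is correct and follows essentially the same approach as the paper: both identify $Z_0(W)$ and $Z_0(W_\kbar)$ with the free $\QQ$-vector spaces on the respective sets of closed points, use that the closed points of $W$ correspond to $G$-orbits on $\cl(W_\kbar)$ (via the decomposition $\kappa(w)\otimes_k \kbar \cong \kbar^{\Hom_k(\kappa(w),\kbar)}$ with transitive $G$-action), and conclude by the elementary fact that the invariants in a permutation representation are spanned by orbit sums. Your write-up is more explicit about the map being flat pullback and about where \'etaleness enters, but the underlying argument is the same.
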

 
 \begin{proof}
 For any scheme $T$ we let $\cl (T)$ denote the set of closed points of $T$. Then
 \[
 Z_0(W) = \QQ\langle \cl W \rangle
 \]
 and 
 \[
 Z_0(W_\kbar) = \QQ\langle \cl W_\kbar \rangle.
 \]
 So it's enough to note that 
 \[
 \cl W = (\cl W_\kbar)^G.
 \]
 Indeed, if $k'$ is a finite field extension of $k$, then
 \[
 k' \otimes \kbar = \kbar^{\Hom_k(k', \kbar)}
 \]
 and $G$ acts transitively on the latter.
 \end{proof}
 
 \begin{slm}
 \label{mosc2.3}
Continuing with the situation and the notation of segments \ref{mcffe}--\ref{mosc2.1}, the isomorphism of lemma \ref{mosc2.2} restricts to a bijection
 \[
 Z_0^\mu(X,Y) = Z_0^\mu(X_\kbar, Y_\kbar)^G.
 \]
 \end{slm}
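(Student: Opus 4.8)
The plan is to show that extension of scalars along $\Spec\kbar \to \Spec k$ is compatible with every piece of structure entering the definition of $Z_0^\mu$, and that on the relevant Hom-groups it is nothing but the inclusion of $G$-invariants already identified in Lemma \ref{mosc2.2}. Concretely, pullback of Chow motives is a symmetric monoidal functor
\[
(-)_\kbar : \ChM(k,\QQ) \to \ChM(\kbar,\QQ)
\]
carrying $C_*X$ to $C_*X_\kbar$ and taking the comultiplications, counits and K\"unneth isomorphisms of segment \ref{15auga} to those of $C_*X_\kbar$ and $C_*Y_\kbar$; this is the functoriality of the homological motive together with the fact that proper pushforward, flat pullback and intersection with the graph of a regular immersion all commute with the flat base change $\Spec\kbar\to\Spec k$. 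By segment \ref{mchc} the Hom-groups that intervene here are Chow groups of $0$-cycles on the $0$-dimensional smooth proper $k$-schemes $X\times Y$, $X\times Y\times Y$ and $X$, and on these groups $(-)_\kbar$ is exactly the map $Z_0(-) \to Z_0\big((-)_\kbar\big)$ which, by (the proof of) Lemma \ref{mosc2.2}, is the inclusion of the $G$-fixed subspace; in particular it is injective.

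First I would deduce that for a cycle $C \in Z_0(W)$, the two diagrams of segment \ref{mosc2} commute in $\ChM(k,\QQ)$ if and only if their images under $(-)_\kbar$ commute in $\ChM(\kbar,\QQ)$: one direction is functoriality, the other is the injectivity just recorded, since commutativity of either diagram is an equality of morphisms, i.e.\ of $0$-cycles. Because $(-)_\kbar$ respects $C_*$, $\delta$, $\ep$ and K\"unneth, the images of these two diagrams are precisely the two diagrams defining membership of $C_\kbar$ in $Z_0^\mu(X_\kbar,Y_\kbar)$; hence $C \in Z_0^\mu(X,Y)$ if and only if $C_\kbar \in Z_0^\mu(X_\kbar,Y_\kbar)$. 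Feeding this into Lemma \ref{mosc2.2}, which already identifies $Z_0(W)$ with $Z_0(W_\kbar)^G$ via $C\mapsto C_\kbar$, gives the desired restriction to a bijection $Z_0^\mu(X,Y) = Z_0^\mu(X_\kbar,Y_\kbar)^G$, the $G$-invariance of $C_\kbar$ being automatic as it is a base change.

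The one point needing care --- and the nearest thing to an obstacle --- is the assertion that $(-)_\kbar$ is a well-defined symmetric monoidal functor realizing cycle pullback and compatible with $\delta$, $\ep$ and the K\"unneth maps. Since $\kbar/k$ need not be finite, the cleanest route is to first descend $X$, $Y$, all the structure morphisms and the cycle $C$ to a finite Galois subextension $k'/k$ over which everything is of finite type, run the argument there, and then pass to $\kbar$ exactly as in the transitivity argument of Lemma \ref{mosc2.2}; alternatively one argues directly from the compatibility of proper pushforward, flat pullback and the relevant intersection products with flat base change. Either way the input is standard and the remainder of the argument is purely formal.
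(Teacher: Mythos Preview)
Your argument is correct and is essentially the paper's own proof, only with more detail: the paper simply observes that each compatibility condition is an equality of $0$-cycles in $Z_0(X\times Y^2)$ or $Z_0(X)$, and that the base-change maps on these groups are injective, so the conditions may be checked over $\kbar$. Your extra care about the symmetric monoidal compatibility of $(-)_\kbar$ and the possible infinitude of $\kbar/k$ is not needed here but does no harm.
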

 
 \begin{proof}
 Compatibility of a 0-cycle in $Z_0(W)$ with diagonals is an equation inside $Z_0( X \times Y^2)$. Since
 \[
 Z_0 (X\times Y^2) \to
  Z_0 \big( (X\times Y^2)_\kbar \big)
 \]
 is injective, the compatibility may be checked after base-change to $\kbar$.
 
 Similarly, compatibility with counits is an equation in $Z_0(X)$ so may be checked after base-change to $\kbar$.
 \end{proof}
 
 \segment{mosc2.4}{}
 We continue the proof of theorem \ref{mcffe}. By lemmas \ref{mosc2.1}--\ref{mosc2.3}, it's enough to establish a $G$-equivariant bijection 
 \[
 \Hom_\kbar(X_\kbar, Y_\kbar) = 
 	Z_0^\mu(X_\kbar, Y_\kbar).
 \]
For the $G$-equivariance, note simply that if $Z' \to Z$ is a $Z$-scheme and $X', Y'$ denote the base-change of $X,Y$ to $Z'$, then the map
\[
\Hom_{Z'}(X',Y') \to Z_0(X'\times Y')
\]
is natural in $Z'$.

\segment{mosc2.5}{}
Finally, we assume $k = \overline k$ and set out to establish a bijection
\[
\Hom_k(X, Y) = 
 	Z_0^\mu(X, Y).
\]
For this, we note that the category of Chow motives of finite \'etale $k$-schemes is naturally isomorphic to the category $\QQ\langle \mbox{fin. set} \rangle$ whose objects are finite sets and whose morphisms are matrices with entries in $\QQ$. Explicitly, if $S$ and $T$ are finite sets, then
\[
\Hom(S,T) = \Hom_{\Set} (T \times S, \QQ).
\]
We replace $C_*X$ with its image in $\QQ\langle \mbox{fin. set} \rangle$, namely, the set of connected components of $X$, and similarly for $C_*Y$.

Fix a morphism
\[
C = ( c_{y,x} )_{x\in X, y\in Y}
\]
from $C_*X \to C_*Y$.

\segment{mosc2.6}{}
We consider first the compatibility with counits. The graph of the structure morphism $X \to Z$ is $X$ itself, so gives rise to the cycle $\sum_{x\in X}x$ where the sum runs over the set of closed points of $X$. Hence, the counit of $C_*X$ in $\QQ\langle \mbox{fin. set} \rangle$ is given by the row-vector $(1, \dots, 1)$, and the associated compatibility condition on $C$ is given by
\[
\begin{pmatrix}
1 & \cdots & 1
\end{pmatrix}
=
\begin{pmatrix}
1 & \cdots & 1
\end{pmatrix}
\begin{pmatrix}
c_{y,x}
\end{pmatrix}_{x \in X, y\in Y}.
\]
Hence
\[
\tag{$\ep$}
\sum_{y \in Y} c_{y,x} = 1.
\]

\segment{mosc2.7}{}
We turn to compatibility with comultiplication. We begin by transporting the tensor structure of Chow motives to $\QQ\langle \mbox{fin. set} \rangle$: the tensor product of objects is given by Cartesian product of sets, and tensor product of morphisms is the tensor product of matrices. Explicitly, given morphisms
\begin{align*}
a: S\to S', && b: T \to T',
\end{align*}
the morphism
\[
a \otimes b : S\times T \to S' \otimes T\
\]
is given by the formula
\[
(a \otimes b)_{(s',t'),(s,t)} := a_{s',s} b_{t',t'}.
\]
In particular, 
\[
C^2: C_*X^2 \to C_*Y
\]
is given by
\[
(c^{\otimes 2})_{(y,y'),(x,x')} = c_{y,x} \cdot c_{y',x'}.
\]

Meanwhile, the comultiplication morphisms are given by
\begin{align*}
\delta_X: C_*X \to C_*X^2 
&&
\delta_{(x',x''),x}
=
\left\{
\begin{matrix}
1 & \mbox{if} & x'' = x' = x
\\
0 &\mbox{} & \mbox{otherwise,}
\end{matrix}
\right.
\end{align*}
and similarly for $Y$. Hence, the compatibility amounts to
\begin{align*}
c_{y,x}\cdot c_{y',x} &=
	c^{\otimes 2}_{(y,y'),(x,x)}
\\
	&= \sum_{(x',x'') \in X^2} 
		c^{\otimes 2}_{(y,y'),(x',x'')} 
		\cdot
		\delta_{(x',x''),x}
\\
	&= (c^{\otimes 2} \cdot \delta_X)_{(y',y''),x} 
\\
	&= ( \delta_Y \cdot c ) _{(y,y'),x}
\\
	&= \sum_{y'' \in Y}
		\delta_{(y,y'),y''} \cdot c_{y'',x}
\\
	&= 
	\left\{
	\begin{matrix}
	c_{y,x} & \mbox{if} & y=y'
	\\
	0 &\mbox{if} & y \neq y'.
	\end{matrix}
	\right.
\end{align*}
Thus, separating the two cases, we obtain two families of equations 
\[
\tag{$\delta_1$}
(c_{y,x})^2 = c_{y,x}
\]
and
\[
\tag{$\delta_2$}
c_{y,x} \cdot c_{y', x} = 0 
\quad
\mbox{if}
\quad
y \neq y'.
\]

\segment{mosc3}{}
Together, equations ($\epsilon$), ($\delta_1$), and ($\delta_2$) imply that for each closed point $x\in X$ there's a unique closed point $y\in Y$ such that
\[
c_{y',x} = 
\left\{
\begin{matrix}
1 & \mbox{if} & y'=y
\\
0 &\mbox{if} & y' \neq y.
\end{matrix}
\right.
\]
This means precisely that the matrix $C$ corresponds to the graph of a morphism $X \to Y$, and completes the proof of theorem \ref{mcffe}.

\begin{sthm}[Artin motivic dga's]
\label{mdffe}
Let $Z = \Spec k$, $k$ a field. If $X,Y$ are finite \'etale over $k$ then
\[
\Hom_{\DMdga(Z,\QQ)}(C^*Y, C^*X) = \Hom_Z(X,Y).
\]
\end{sthm}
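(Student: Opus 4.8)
The plan is to feed the dimension-zero case of Theorem~\ref{nmc} into Proposition~\ref{mcffe}, the bridge being that for finite \'etale $k$-schemes everything occurring in Theorem~\ref{nmc} is homotopically discrete, so that its homotopy limits collapse to ordinary limits of $\QQ$-vector spaces, which are then recognizable as spaces of morphisms of motivic Chow coalgebras. A finite \'etale $k$-scheme is smooth and proper over $Z=\Spec k$ of relative dimension $d=0$, so the identity map is an admissible compactification (with empty snc complement) and every scheme $Y^{S_1}\times X$ that will appear is again finite \'etale; applying Theorem~\ref{nmc} with the roles of $X$ and $Y$ exchanged therefore yields
\[
\Hom_{\DDMdga}(C^*Y,C^*X)=\lim_{k\in\Delta}\ \Omega^\infty\lim_{S_1\to\cdots\to S_k}\Kk^{(0)}(Y^{S_1}\times X),
\]
since $ds_1=0$ for every $S_1$.

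I would then verify that for a finite \'etale $k$-scheme $W$ the spectrum $\Kk^{(0)}(W)=\Hom_{\DDA}(C_*W,\QQ(0))$ is an Eilenberg--MacLane spectrum concentrated in degree $0$, with $\pi_0\,\Kk^{(0)}(W)=K_0^{(0)}(W)$ the $\QQ$-vector space freely spanned by the closed points of $W$: writing $W=\coprod_j\Spec k_j$ with $k_j/k$ finite separable reduces the claim to $\pi_i\,\Kk^{(0)}(\Spec k_j)=H^{-i}_{\mot}(k_j,\QQ(0))=\mathrm{CH}^0(\Spec k_j,i)_\QQ$, which is $0$ for $i\ne 0$ and $\QQ$ for $i=0$. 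Consequently every space under the outer limit above is discrete. Since a small homotopy limit of a diagram of discrete spaces is discrete and computes the ordinary limit of the underlying sets, $\Hom_{\DDMdga}(C^*Y,C^*X)$ is discrete, so that (taking $\pi_0$)
\[
\Hom_{\DMdga}(C^*Y,C^*X)=\lim_{k\in\Delta}\ \lim_{S_1\to\cdots\to S_k}\Hom_{\DA}\big(C^*(Y^{S_1}),C^*X\big),
\]
an honest limit of $\QQ$-vector spaces; here I have used duality of Artin motives to rewrite $\Kk^{(0)}(Y^{S_1}\times X)$ as $\Hom_{\DA}(C^*(Y^{S_1}),C^*X)$, which simultaneously matches the transition maps with those of Proposition~\ref{monres}.

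To identify this double limit, observe that the same vanishing shows that the full subcategory $\mathcal{A}\subset\DDA(Z,\QQ)$ spanned by the (co)homological motives of finite \'etale $k$-schemes is a symmetric monoidal $1$-category; hence the displayed diagram is exactly the monadic-resolution diagram of Proposition~\ref{monres}/Theorem~\ref{monresabs} read inside $\mathcal{A}$, and since a commutative algebra object of a symmetric monoidal $1$-category is an ordinary commutative monoid it computes $\Hom_{\CAlg\DA(Z,\QQ)}(C^*Y,C^*X)$. The (contravariant) duality $D$ of \S\ref{15augb}, which on smooth proper schemes of relative dimension $0$ carries $C^*$ to $C_*$, then identifies this with $\Hom_{\MChC(Z,\QQ)}(C_*X,C_*Y)$, and Proposition~\ref{mcffe} finishes by evaluating the latter as $\Hom_Z(X,Y)$.

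The weight of the argument rests on the already-established Theorems~\ref{nmc} and~\ref{mcffe}; the only genuinely new steps are the concentration of $\Kk^{(0)}$ of a finite \'etale scheme in degree $0$ and the bookkeeping that collapses the homotopy limit. I expect the main obstacle to be the identification in the third paragraph: one must check not merely that the vertices of the limit diagram produced by Theorem~\ref{nmc} agree with those of the monadic-resolution diagram for $\mathcal{A}$, but that the full diagrams, together with all their structure maps, agree --- which holds because the diagram in Proposition~\ref{monres} was constructed functorially in the ambient symmetric monoidal $\infty$-category.
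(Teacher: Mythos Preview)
Your proposal is correct and follows essentially the same route as the paper: apply the comparison theorem (Theorem~\ref{nmc}) in the proper, dimension-zero case, use the vanishing of $K_i^{(0)}$ of a finite \'etale $k$-scheme for $i\neq 0$ to collapse the homotopy limit to an ordinary limit, identify the result with $\Hom_{\CAlg\DA}(C^*Y,C^*X)=\Hom_{\MChC}(C_*X,C_*Y)$ via duality, and invoke Proposition~\ref{mcffe}. Your justification for the collapse (discreteness of the inner spaces and the fact that the inclusion of $0$-truncated spaces into $\Ss$ preserves limits) and your identification of the resulting diagram with the monadic-resolution diagram in the $1$-category of Artin motives are slightly more conceptual than the paper's ``by direct computation,'' but the substance is the same.
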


\begin{proof}
We have
\begin{align*}
\tag{$*$}
\Hom_{\DMdga(Z,\QQ)}(C^*Y, C^*X) &= 
	\pi_0\Hom_{\DDMdga(Z,\QQ)}(C^*Y, C^*X)
\\
	&= \pi_0 
 \lim_{\Delta} \lim_{S = (S_1 \to \cdots \to S_k)}
 \Hom_{\DDA}(C^*X^{\otimes S_1}, C^*Y).
\end{align*}
By the smooth proper case of theorem \ref{mosc1}, 
\[
\Hom_{\DDA(Z,\QQ)}
(C^*X^{\otimes S_1}, C^*Y)
=
\Kk^{(0)}(X^{S_1} \times Y),
\]
hence 
\[
\pi_i  \Hom_{\DDA}(C^*X^{\otimes S_1}, C^*Y) = 
K_i^{(0)}(X^{S_1} \times Y) 
\]
vanishes for $i \neq 0$. Indeed, this holds for any scheme and appears to be an in-built feature of the original K-theoretic construction; see \cite[\S3]{SchneiderIntroBeilinson} and the references \cite{Kratzer, Hiller} given there.\footnote{There is also a purely motivic explanation. Motivic cohomology is also the cohomology of certain complexes of Zariski sheaves; $\QQ(0)$ corresponds to the constant sheaf $\QQ$ concentrated in degree $0$.} Consequently, 
\[
(*) = \lim_{k \in \Delta} 
\prod_{[S = (S_1 \to \cdots \to S_k)]}
 \Hom_{\DA}(C^*X^{\otimes S_1}, C^*Y) ^{\Aut S}
 \]
where the solid ``D'' indicates that we're in the homotopy category. Being a limit of \textit{sets}, the limit over $\Delta$ is simply an equalizer, which by direct computation is equal to

\begin{align*}
& \Hom_{\CAlg \DA} (C^*Y, C^*X)
 \\
 &= \Hom_{\MChC}(C_*X, C_*Y)
 \\
 &= \Hom_Z(X,Y)
\end{align*}
as claimed.
\end{proof}
\bibliography{NMH_refs}
\bibliographystyle{plain}
\vfill
\Small\textsc{
ID: Department of Mathematics\\Ben-Gurion University of the Negev
} 
\\
\texttt{ishaidc@gmail.com}
\\

\smallskip

\noindent
\Small\textsc{
TS: Einstein Institute of Mathematics
\\
Hebrew University of Jerusalem
}
\\
\texttt{tomer.schlank@mail.huji.ac.il}

\end{document}